\numberwithin{equation}{section}
\newtheorem{theorem}{Theorem}[section]
\newtheorem{lemma}{Lemma}[section]
\newtheorem{remark}{Remark}[section]
\newtheorem{cor}{Corollary}[section]
\newtheorem{prop}{Proposition}[section]
\newtheorem*{proof}{Proof.}  
\newcommand{\beq}{\begin{equation}}
\newcommand{\eeq}{\end{equation}}
\newcommand{\beqn}{\begin{eqnarray}}
\newcommand{\eeqn}{\end{eqnarray}}
\date{}%
\begin{document}

\date{}
\title{The error and perturbation bounds of the general absolute value equations\thanks{This
research was supported by National Natural Science Foundation of
China (No.11961082) and Cross-integration Innovation team of modern
Applied Mathematics and Life Sciences in Yunnan Province,
China(No.202405AS350003).}}
\author{Shi-Liang Wu\thanks{Corresponding author:
slwuynnu@126.com}, Cui-Xia Li\thanks{lixiatkynu@126.com}\\
{\small{\it $^{\dagger,\dagger}$School of Mathematics, Yunnan Normal University,}}\\
{\small{\it Kunming, Yunnan, 650500, P.R. China}}\\
{\small{\it $^{\dagger}$Yunnan Key Laboratory of Modern Analytical Mathematics and Applications,}}\\
{\small{\it  Yunnan Normal University, Kunming, Yunnan, 650500, P.R.
China}} }
 \maketitle
\begin{abstract}
To our knowledge,  the error and perturbation bounds of the general
absolute value equations are not discussed. In order to fill in this
study gap, in this paper, by introducing a class of absolute value
functions, we study the error and perturbation bounds of two types
of the general absolute value equations (AVEs): $Ax-B|x|=b$ and
$Ax-|Bx|=b$. Some useful error bounds and perturbation bounds of the
above two types of absolute value equations are provided. Without
limiting the matrix type, some computable estimates for the above
upper bounds are given. By applying the absolute value equations, a
new approach for some existing perturbation bounds  of the linear
complementarity problem (LCP) in (SIAM J. Optim., 18 (2007), pp.
1250-1265) is provided. Some numerical examples for the AVEs from
the LCP are given to show the feasibility of the perturbation
bounds.

\textit{Keywords:} Absolute value equations;  the error bound;  the
perturbation bound; linear complementarity problem

\textit{AMS classification:} 65G50, 90C33
\end{abstract}

\section{Introduction} Consider the following two types of the general absolute value equations
(AVEs)
\begin{equation}\label{p1}
Ax-B|x|=b
\end{equation}
and
\begin{equation}\label{eq:2}
Ax-|Bx|=b,
\end{equation}
where $A,B\in \mathbb{R}^{n\times n}$ and $b\in \mathbb{R}^{n}$,
$|\cdot|$ denotes the componentwise absolute value of the vector.
The AVEs (\ref{p1}) and (\ref{eq:2}), respectively, were introduced
in \cite{Rohn04} by Rohn and \cite{Wu21a} by Wu. Clearly, when $B=I$
in (\ref{p1}) and  (\ref{eq:2}), where $I$ denotes the identity
matrix, the AVEs (\ref{p1}) and (\ref{eq:2}) reduce to the standard
absolute value equations
\begin{equation}\label{p3}
Ax-|x|=b,
\end{equation}
which was considered in \cite{Mangasarian06} by Mangasarian and
Meyer.

Over these years, the AVEs (\ref{p1}), (\ref{eq:2}) and  (\ref{p3})
have excited much interest since they often occur in many
significant mathematical programming problems, including linear
programs, quadratic programs, bimatrix game, linear complementarity
problem (LCP),  see \cite{Mangasarian06,Mangasarian07, Rohn89,
Prokopyev09} and references therein. For instance, the AVEs
(\ref{p1}) is equivalent to the LCP of determining a vector $z\in
\mathbb{R}^{n}$ such that
\begin{equation}\label{eq:14}
w=Mz+q\geq0,\ z\geq0 \ \mbox{and}\ z^{T}w=0 \ \mbox{with}\ M\in
\mathbb{R}^{n\times n}\ \mbox{and}\  q\in \mathbb{R}^{n}.
\end{equation}
By using $z=|x|+x$ and $w=|x|-x$ for (\ref{eq:14}), the AVEs
(\ref{p1}) is obtained with $A=I+M$,  $B=I-M$ and $b=-q$.

Although the AVEs in \cite{Mangasarian06} is a NP-hard problem, so
far, a large number of theoretical results, numerical methods and
applications have been extensively excavated. For instance, among
the theoretical results, in spite of determining the existence of
the solution of the AVEs in \cite{Mangasarian07,Chen21} is NP-hard,
and checking whether the AVEs has unique or multiple solutions in
\cite{Prokopyev09} is also
NP-complete, 
there still exist some very important conclusions, in particular,
some sufficient and necessary conditions for ensuring the existence
and uniqueness of the solutions of  the AVEs (\ref{p1}),
(\ref{eq:2}) and (\ref{p3})  were established for any $b\in
\mathbb{R}^{n}$, see \cite{Wu21b, Wu18, Wu21a, Mezzadri20}.

Likewise, solving the AVEs  in \cite{Mangasarian07} is NP-hard as
well. It may be due to the fact that the AVEs contains a non-linear
and non-differential absolute value operator. Even so, some
efficient numerical methods have been developed, such as the
generalized Newton method \cite{Mangasarian09}, Newton-based matrix
splitting method \cite{Zhou21}, the exact and inexact
Douglas-Rachford splitting method \cite{Chen22}, the Picard-HSS
method \cite{Salkuyeh14}, the sign accord method \cite{Rohn09}, the
concave minimization method \cite{Mangasarian07b}, the
Levenberg-Marquardt method \cite{Iqbal15}, and so on.

As an important application of the AVEs, for all we know, the AVEs
was first viewed as a very effective tool to gain the numerical
solution of the LCP in \cite{Bokhoven80}, called the modulus method.
At present, this numerical method has achieved rapid development and
its many various versions were proposed, see \cite{Bai10, Wu22} and
references therein. Since the modulus method has the superiorities
of simple construction and quick convergence behavior, it is often
regarded as a top-priority method for solving the large-scale and
sparse complementarity problem (CP).

In addition to the above aspects about the AVEs, another very
important problem is the sensitivity and stability analysis of the
AVEs, i.e., how the solution variation is when the data is
perturbed. More specifically, when $\Delta A, \Delta B
~\mbox{and}~\Delta b$ are the perturbation terms of $A,
B~\mbox{and}~b$ in (\ref{p1}) and (\ref{eq:2}), respectively, how do
we characterize the change in the solution of the perturbed AVEs.
With respect to this regard, to our knowledge, the perturbation
analysis of the AVEs (\ref{p1}) and (\ref{eq:2}) has not been
discussed. In addition, for the error bound of AVEs, under the
assumption of strongly monotone property, a global projection-type
error bound was provided in \cite{Chen21}.  Obviously, this kind of
conditional projection-type error bound frequently has great
limitation.\cite{Zamani} only provides some upper bounds for the
error bound of the AVE (\ref{p1}) with $B=I$. Therefore, based on
these considerations, in this paper, we in-depth discuss the error
bounds and the perturbation bounds of the AVEs. The contributions
are given below:
\begin{itemize}
\item  Firstly, by introducing a class of the absolute value
functions, the framework of error bounds for the AVEs are presented.
Without limiting the matrix type, some computable estimates for
their upper bounds are given. These bounds are sharper than the
existing bounds in \cite{Chen21} under certain conditions.
\item Secondly, we establish the framework of perturbation
bounds for the AVEs and present  some computable upper bounds. It is
pointed out that when the nonlinear term $B|x|$ in (\ref{p1}) is
vanished, the presented perturbation bounds reduce to the classical
perturbation bounds for the linear systems $Ax=b$, like Theorem 1.3
in numerical linear algebra textbooks \cite{Sun01} and Theorem 2.1
\cite{Skeel79}.

\item  Thirdly,
a new approach for some existing perturbation bounds of the LCP in
\cite{Chen07}  is provided as well.

\item Of course, finally,
to show the efficiency of some proposed bounds, some numerical
examples for the AVEs from the LCP are investigated.
\end{itemize}

The rest of the article is organized as follows. Section 2 provides
the framework of error bounds for the AVEs by introducing a class of
absolute value functions.  In Section 3, some perturbation bounds
for the AVEs are provided. In Section 4, as an application, a new
approach for some existing perturbation bounds of the LCP in
\cite{Chen07}  is provided. In Section 5, some numerical examples
for the AVEs from the LCP are given to show the feasibility of some
perturbation bounds. Finally, in Section 6, we end up with this
paper with some conclusions.

\vspace{0.3cm} Finally, to end this section, we remind some
notations, definitions  and conclusions in
\cite{Berman,Wu21a,Wu21b,Horn}, which will be used in the later
discussion.

Let $A = (a_{ij})$ and $N=\{1,2,\ldots,n\}$. Then we denote
$|A|=(|a_{ij}|)$. $A =(a_{ij})$ is called an $M$-matrix if
$A^{-1}\geq0$ and $a_{ij}\leq0$ ($i\neq j$) for $i,j \in N$; an
$H$-matrix if its comparison matrix $\langle A\rangle$ (i.e.,
$\langle a_{ii}\rangle=|a_{ii}|, \langle a_{ij} \rangle=-|a_{ij}|$
$i\neq j$ for $i,j \in N$) is an $M$-matrix; an $H_{+}$-matrix if
$A$ is an $H$-matrix with $a_{ii} > 0$ for  $i\in N$; a $P$-matrix
if all principal minors of $A$ are positive. Let $\rho(\cdot)$,
$\sigma_{\min}$ and $\sigma_{\max}$ denote the spectral radius, the
smallest singular value and the largest singular value of  matrix,
respectively. For two vectors $q, e\in \mathbb{R}^{n}$, by $q_{+}$
and $q_{-}$ we denote $q_{+}=\max\{0,q\}$, $q_{-}=\max\{0,-q\}$ and
$e=(1,1,\ldots,1)^{T}$. The norm $\|\cdot\|$ means $p$-norm, i.e.,
$\|\cdot\|_{p}$ with $p\geq1$.


The AVEs (\ref{p1}) has a unique solution for any $b\in
\mathbb{R}^{n}$ if and only if $A-BD$ is nonsingular for any
diagonal matrix $D = diag(d_{i})$ with $d_{i}\in [-1, 1]$, see
Theorem 3.2 in \cite{Wu21b}; the AVEs (\ref{eq:2}) has a unique
solution for any $b\in \mathbb{R}^{n}$ if and only if $A-DB$ is
nonsingular for any diagonal matrix $D = diag(d_{i})$ with $d_{i}\in
[-1, 1]$, see Theorem 3.3 in \cite{Wu21a}.

\section{Error bound} In this section, without further illustration, we always assume that
the AVEs (\ref{p1}) and (\ref{eq:2}) have the unique solution. Under
this premise,  we can give the framework of error bounds on the
distance between the approximate solutions and the exact solutions
of the AVEs (\ref{p1}) and AVEs (\ref{eq:2}), respectively.

\subsection{Framework of error bounds for AVEs}
In this subsection, the framework of error bounds for the AVEs is
obtained. To achieve our goal, the following absolute value function
is introduced, see Lemma 2.1.
\begin{lemma}
Let  $a=(a_{1},a_{2},\ldots,a_{n})^{T},
b=(b_{1},b_{2},\ldots,b_{n})^{T}$ be any two vectors in
$\mathbb{R}^{n}$. Then there exist $d_{i}\in [-1,1]$ such that
\begin{equation}\label{eq:21}
|a_{i}|-|b_{i}|=d_{i}(a_{i}-b_{i}), \mbox{for\ all}\ i=1,2,\ldots,n.
\end{equation}
\end{lemma}
\begin{proof} Its proof is straightforward, which is omitted.
\end{proof}

Let
\[
r(x)=Ax-B|x|-b.
\]
Clearly,  $x^{\ast}$ is a solution of the AVEs (\ref{p1}) if and
only if $r(x^{\ast}) =0$. The function $r(x)$ is called the natural
residual of the AVEs (\ref{p1}). Let $x^{\ast}$ be the unique
solution of the AVEs (\ref{p1}). Then from Lemma 2.1 we have
\begin{align*}\label{31}
r(x)=& Ax-B|x|-b-(Ax^{\ast}-B|x^{\ast}|-b)\\
=& A(x-x^{\ast})-B(|x|-|x^{\ast}|)\\
=& A(x-x^{\ast})-B\tilde{D}(x-x^{\ast})\\
=& (A-B\tilde{D})(x-x^{\ast}),
\end{align*}
where $\tilde{D}=diag(\tilde{d}_{i})$ with $\tilde{d}_{i}\in
[-1,1]$, which promptly results in the error bounds for the AVEs
(\ref{p1}), see Theorem 2.1.

\begin{theorem} Let $x^*$ be the unique solution of AVEs (\ref{p1}). Then for any $x\in
\mathbb{R}^{n}$,
\begin{equation}\label{eq:22}
\frac{1}{\underline{\alpha}} \|r(x)\|\leq\|x-x^{\ast}\|\leq
\overline{\alpha}\|r(x)\|,
\end{equation}
where
\begin{equation*}
\underline{\alpha}=\max\{\|A-BD\|: D = \mbox{diag}(d_i)~
\mbox{with}~ d_i \in[-1,1]\}\end{equation*} \mbox{and}
\begin{equation*} \overline{\alpha}=\max\{\|(A-BD)^{-1}\|: D =
\mbox{diag}(d_i)~ \mbox{with}~ d_i \in[-1,1]\}.
\end{equation*}
\end{theorem}

In Theorem 2.1, $\underline{\alpha}\overline{\alpha}\geq1$. In fact,
\begin{align*}
\underline{\alpha}\overline{\alpha}&\geq\max\{\|A-BD\|\|(A-BD)^{-1}\|:D
= diag(d_i)~ \mbox{with}~ d_i
\in[-1,1]\}\\
&\geq\max\{\|(A-BD)(A-BD)^{-1}\|:D = diag(d_i)~ \mbox{with}~ d_i
\in[-1,1]\}\\
&=1.
\end{align*}
Similarly, the following involved results, i.e., Theorem 2.2 and
Corollary 2.1, are completely analogical.

By using the same technique for the AVEs (\ref{eq:2}), we have

\begin{theorem} Let $x^*$ be the unique solution of AVEs (\ref{eq:2}). Then for any $x\in
\mathbb{R}^{n}$,
\begin{equation}\label{eq:23}
\frac{1}{\underline{\beta}} \|r(x)\|\leq\|x-x^{\ast}\|\leq
\overline{\beta}\|r(x)\|,
\end{equation}
where
\begin{equation*}
\underline{\beta}=\max\{\|A-DB\|: D =\mbox{diag}(d_i)~ \mbox{with}~
d_i \in[-1,1]\}\end{equation*} \mbox{and} \begin{equation*}
\overline{\beta}=\max\{\|(A-DB)^{-1}\|: D = diag(d_i)~ \mbox{with}~
d_i \in[-1,1]\}.
\end{equation*}
\end{theorem}

When $B=I$ in Theorem 2.1 or Theorem 2.2,  the error bounds for the
AVEs (\ref{p3}) can be obtained, see Corollary 2.1.

\begin{cor} Let $x^*$ be the unique solution of AVEs (\ref{p3}). Then for any $x\in
\mathbb{R}^{n}$,
\begin{equation}\label{eq:24}
\frac{1}{\underline{\gamma}} \|r(x)\|\leq\|x-x^{\ast}\|\leq
\overline{\gamma}\|r(x)\|,
\end{equation}
where
\begin{equation*}
\underline{\gamma}=\max\{\|A-D\|: D = diag(d_i)~ \mbox{with}~ d_i
\in[-1,1]\}\end{equation*} \mbox{and} \begin{equation*}
\overline{\gamma}=\max\{\|(A-D)^{-1}\|: D = diag(d_i)~ \mbox{with}~
d_i \in[-1,1]\}.
\end{equation*}
\end{cor}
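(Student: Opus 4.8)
The plan is to obtain Corollary 2.1 as the direct specialization $B=I$ of Theorem 2.1 (equivalently of Theorem 2.2). First I would observe that the standard AVE (\ref{p3}), namely $Ax-|x|=b$, is precisely the AVE (\ref{p1}), $Ax-B|x|=b$, in the case $B=I$; consequently its natural residual $r(x)=Ax-|x|-b$ is exactly the residual of Theorem 2.1 with $B$ replaced by the identity. Since $x^{\ast}$ is assumed to be the unique solution, Theorem 2.1 applies verbatim and already delivers the chain (\ref{eq:22}); it remains only to identify the constants.

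Second, I would verify that the suprema collapse as claimed. Setting $B=I$ gives $A-BD=A-ID=A-D$ for every diagonal $D=diag(d_{i})$ with $d_{i}\in[-1,1]$, so that the defining sets of matrices coincide and hence
\[
\underline{\alpha}=\max\{\|A-BD\|:d_{i}\in[-1,1]\}=\max\{\|A-D\|:d_{i}\in[-1,1]\}=\underline{\gamma},
\]
and likewise
\[
\overline{\alpha}=\max\{\|(A-BD)^{-1}\|:d_{i}\in[-1,1]\}=\max\{\|(A-D)^{-1}\|:d_{i}\in[-1,1]\}=\overline{\gamma}.
\]
Substituting these identifications into (\ref{eq:22}) produces exactly (\ref{eq:24}). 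The same reduction works starting from Theorem 2.2, since $A-DB$ becomes $A-D$ when $B=I$, so $\underline{\beta}$ and $\overline{\beta}$ also collapse to $\underline{\gamma}$ and $\overline{\gamma}$.

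For a fully self-contained argument I would instead repeat the derivation preceding Theorem 2.1 with $B=I$: applying Lemma 2.1 to the pair $x,x^{\ast}$ yields $|x|-|x^{\ast}|=\tilde{D}(x-x^{\ast})$ for some $\tilde{D}=diag(\tilde{d}_{i})$ with $\tilde{d}_{i}\in[-1,1]$, whence
\[
r(x)=A(x-x^{\ast})-(|x|-|x^{\ast}|)=(A-\tilde{D})(x-x^{\ast}).
\]
Taking norms and bounding $\|A-\tilde{D}\|\le\underline{\gamma}$ gives the lower estimate, while inverting and bounding $\|(A-\tilde{D})^{-1}\|\le\overline{\gamma}$ gives the upper estimate. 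No genuine obstacle arises; the only point needing care is the bookkeeping of the maxima, that is, confirming the equality of the index sets $\{A-ID:d_{i}\in[-1,1]\}=\{A-D:d_{i}\in[-1,1]\}$, which is immediate. The nonsingularity of each $A-\tilde{D}$, required for $\overline{\gamma}$ to be finite and for the lower bound to make sense, is guaranteed by the uniqueness hypothesis through the characterization recalled at the end of Section 1.
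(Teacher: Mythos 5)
Your proposal is correct and matches the paper's own route: the paper obtains Corollary 2.1 precisely by setting $B=I$ in Theorem 2.1 (or Theorem 2.2), which is exactly your specialization argument, and your self-contained rederivation via Lemma 2.1 is just the same computation that precedes Theorem 2.1. No discrepancy to report.
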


\begin{remark} Clearly,  Corollary 2.1 provides the lower and upper error bounds for
the AVEs (\ref{p3}). Meanwhile, it is not difficult to find that the
right side in (\ref{eq:24}) is equal to Theorem 7 in \cite{Zamani},
which is a main result in \cite{Zamani}. Not only that, we present a
lower error bound for the AVEs (\ref{p3}). In this way, we remedy
their work.
\end{remark}

In \cite{Chen21}, with strongly monotone property, Chen et al. also
considered  the global error bound of the AVEs (\ref{p3}) and
presented the following result.

\begin{theorem} (Theorem 4.1 in \cite{Chen21}) Let $x^*$ be the unique solution of AVEs (\ref{p3}). Then for any $x\in
\mathbb{R}^{n}$,
\begin{equation}\label{eq:25}
\frac{1}{\underline{C}} \|r(x)\|\leq\|x-x^{\ast}\|\leq
\overline{C}\|r(x)\|,
\end{equation}
where
\begin{equation*}
\underline{C}=\|A +I\|+\|A-I\|\ \mbox{and}\ \overline{C}=\frac{\|A
+I\|+\|A-I\|}{\sigma_{\min}(A)^{2}-1} \ \mbox{with}\
\sigma_{\min}(A)>1.
\end{equation*}
\end{theorem}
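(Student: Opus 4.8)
The plan is to reduce everything to the one-line residual identity already prepared in the text and then extract the two bounds by pairing the residual against a well-chosen companion vector. Writing $u = x - x^{\ast}$ and $v = |x| - |x^{\ast}|$, the relation $r(x^{\ast}) = 0$ gives $r(x) = Au - v$. The componentwise reverse triangle inequality $\big||x_i| - |x_i^{\ast}|\big| \le |x_i - x_i^{\ast}|$ yields $\|v\| \le \|u\|$, and Lemma 2.1 lets me write $v = \tilde{D}u$ with $\tilde{D} = \mathrm{diag}(\tilde{d}_i)$, $\tilde{d}_i \in [-1,1]$, so that $r(x) = (A - \tilde{D})u$.

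First I would record a uniform norm estimate for the whole family $A \pm \tilde{D}$. The key observation is the convex-splitting identity
\[
A - \tilde{D} = \tfrac12(I - \tilde{D})(A+I) + \tfrac12(I + \tilde{D})(A-I),
\]
together with its analogue for $A + \tilde{D}$, combined with $\|I \pm \tilde{D}\| = \max_i |1 \pm \tilde{d}_i| \le 2$. These give $\|A \pm \tilde{D}\| \le \|A+I\| + \|A-I\| = \underline{C}$. The lower bound is then immediate: $\|r(x)\| = \|(A - \tilde{D})u\| \le \underline{C}\,\|u\|$, that is, $\frac{1}{\underline{C}}\|r(x)\| \le \|x - x^{\ast}\|$. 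This half holds in any $p$-norm.

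The upper bound is the crux, and it is genuinely a Euclidean (2-norm) argument, which is exactly why the hypothesis is phrased through $\sigma_{\min}(A)$. The idea is to pair the residual $r = Au - v$ with the companion vector $Au + v$ and exploit the difference of squares:
\[
\langle r,\; Au + v\rangle = \langle Au - v,\; Au + v\rangle = \|Au\|^2 - \|v\|^2 \ge \big(\sigma_{\min}(A)^2 - 1\big)\,\|u\|^2,
\]
where the last step uses $\|Au\|^2 \ge \sigma_{\min}(A)^2\|u\|^2$ and $\|v\| \le \|u\|$. On the other hand $Au + v = (A + \tilde{D})u$, so the norm estimate of the previous step gives $\|Au + v\| \le \underline{C}\,\|u\|$, and Cauchy--Schwarz bounds the same inner product from above by $\|r\|\,\underline{C}\,\|u\|$. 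Comparing the two and cancelling one factor of $\|u\|$ (the case $u = 0$ being trivial) yields $\|u\| \le \frac{\underline{C}}{\sigma_{\min}(A)^2 - 1}\|r(x)\| = \overline{C}\,\|r(x)\|$; the assumption $\sigma_{\min}(A) > 1$ is precisely what keeps the denominator positive so that the division preserves the inequality.

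The main obstacle is locating the correct companion vector. The naive estimate $\|r\| = \|(A-\tilde{D})u\| \ge (\sigma_{\min}(A)-1)\|u\|$ produces a constant of a different algebraic form, and it is only the pairing against $Au + v$ that converts the residual into $\|Au\|^2 - \|v\|^2$ and thereby surfaces the factor $\sigma_{\min}(A)^2 - 1$ claimed in the statement. Everything else is triangle-inequality and Cauchy--Schwarz bookkeeping; the only structural inputs are Lemma 2.1 and the convex-splitting identity for $A \pm \tilde{D}$.
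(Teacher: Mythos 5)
Your proof is correct, but there is an important mismatch of expectations to flag: the paper never proves this statement at all — it is imported verbatim as Theorem 4.1 of \cite{Chen21} and used only as a benchmark against which the paper's own bounds (Corollary 2.1) are compared. The closest thing to a proof inside the paper is indirect: Corollary 2.1 gives $\tfrac{1}{\underline{\gamma}}\|r(x)\|\le\|x-x^{\ast}\|\le\overline{\gamma}\|r(x)\|$ with $\underline{\gamma}=\max\|A-D\|_{2}$ and $\overline{\gamma}=\max\|(A-D)^{-1}\|_{2}$, and Theorem 2.4 then establishes $\underline{\gamma}\le\underline{C}$ and $\overline{\gamma}\le\overline{C}$, the latter via the Banach perturbation lemma, i.e. $\|(A-D)^{-1}\|_{2}\le\|A^{-1}\|_{2}/(1-\|A^{-1}\|_{2})$, plus the inequality $1+\|A^{-1}\|_{2}\le\|I+A^{-1}\|_{2}+\|I-A^{-1}\|_{2}$; chaining these re-derives the statement. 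Your route is genuinely different and more elementary: you work entirely at the vector level, never invert $A-\tilde{D}$, and never invoke the Banach lemma. The pairing $\langle Au-v,\,Au+v\rangle=\|Au\|_{2}^{2}-\|v\|_{2}^{2}\ge(\sigma_{\min}(A)^{2}-1)\|u\|_{2}^{2}$, combined with Cauchy--Schwarz and your convex-splitting bound $\|(A\pm\tilde{D})u\|_{2}\le(\|A+I\|_{2}+\|A-I\|_{2})\|u\|_{2}$, makes transparent exactly where the factor $\sigma_{\min}(A)^{2}-1$ in Chen et al.'s constant comes from — it is essentially the strong-monotonicity estimate underlying the original proof in \cite{Chen21}. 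What the paper's route buys instead is the sharper intermediate quantity $\overline{\gamma}$, and hence the comparison result (Theorem 2.4) showing that Corollary 2.1 dominates this bound; your direct argument, self-contained as it is, does not produce that comparison. Your remark that the lower bound is norm-independent while the upper bound is intrinsically Euclidean is also consistent with the paper, which explicitly restricts to $\|\cdot\|_{2}$ when discussing this theorem.
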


Now, we show that the error bounds in Corollary 2.1 are sharper than
that in  Theorem 2.3. Here, we consider $\|\cdot\|_{2}$ for
Corollary 2.1 and Theorem 2.3.
\begin{theorem} Let the assumptions of Corollary 2.1 and Theorem 2.3 be
satisfied. Then for any diagonal matrix $D = diag(d_{i})$ with
$d_{i}\in [-1, 1]$,
\begin{equation}\label{eq:26}
\|A +I\|_{2}+\|A-I\|_{2}\geq\max\|A-D\|_{2}
\end{equation}
and
\begin{equation}\label{eq:27}
\max\|(A-D)^{-1}\|_{2}\leq \frac{\|A
+I\|_{2}+\|A-I\|_{2}}{\sigma_{\min}(A)^{2}-1}.
\end{equation}
\end{theorem}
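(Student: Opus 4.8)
The plan is to establish each of the two displayed inequalities by fixing an arbitrary diagonal matrix $D=\mathrm{diag}(d_i)$ with $d_i\in[-1,1]$, proving the corresponding bound for this $D$, and then passing to the maximum over all such $D$. Throughout I would work in $\|\cdot\|_2$ and use the standing hypothesis $\sigma_{\min}(A)>1$ inherited from Theorem 2.3.

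For \eqref{eq:26} the key device is the convex-splitting identity
\[
A-D=\frac{I+D}{2}(A-I)+\frac{I-D}{2}(A+I),
\]
which one verifies by expanding the right-hand side. The diagonal factors $\frac{I+D}{2}$ and $\frac{I-D}{2}$ have entries $\frac{1\pm d_i}{2}\in[0,1]$, so each has spectral norm at most $1$; the triangle inequality together with submultiplicativity of $\|\cdot\|_2$ then yields $\|A-D\|_2\le\|A-I\|_2+\|A+I\|_2$, and maximizing over $D$ gives \eqref{eq:26}. (A cruder route also works: $\|A-D\|_2\le\sigma_{\max}(A)+1\le 2\sigma_{\max}(A)=\|(A+I)+(A-I)\|_2$.)

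For \eqref{eq:27}, which I expect to be the real obstacle, I would first bound $\|(A-D)^{-1}\|_2$ on its own and only afterwards compare with the target. Since $\|(A-D)^{-1}\|_2=1/\sigma_{\min}(A-D)$, the crux is a lower bound on $\sigma_{\min}(A-D)$. For any unit vector $x$,
\[
\|(A-D)x\|_2\ge\|Ax\|_2-\|Dx\|_2\ge\sigma_{\min}(A)-\|D\|_2\ge\sigma_{\min}(A)-1>0,
\]
so $\sigma_{\min}(A-D)\ge\sigma_{\min}(A)-1$ and therefore $\|(A-D)^{-1}\|_2\le 1/(\sigma_{\min}(A)-1)$.

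It then remains to show this intermediate estimate is dominated by the right-hand side of \eqref{eq:27}. Factoring $\sigma_{\min}(A)^2-1=(\sigma_{\min}(A)-1)(\sigma_{\min}(A)+1)$, the desired inequality $\frac{1}{\sigma_{\min}(A)-1}\le\frac{\|A+I\|_2+\|A-I\|_2}{\sigma_{\min}(A)^2-1}$ reduces to $\sigma_{\min}(A)+1\le\|A+I\|_2+\|A-I\|_2$, which follows from
\[
\|A+I\|_2+\|A-I\|_2\ge\|(A+I)+(A-I)\|_2=2\sigma_{\max}(A)\ge 2\sigma_{\min}(A)>\sigma_{\min}(A)+1.
\]
Chaining the two estimates and maximizing over $D$ gives \eqref{eq:27}. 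The subtle point is that the naive route---bounding $\|(A-D)^{-1}\|_2$ by $\|A-D\|_2/\sigma_{\min}(A-D)^2$ in the hope that $\sigma_{\min}(A-D)^2\ge\sigma_{\min}(A)^2-1$---is doomed: for $A=aI$ one has $\sigma_{\min}(A-D)^2=(a-1)^2<a^2-1$. Hence it is essential to pass through the sharper bound $1/(\sigma_{\min}(A)-1)$ before comparing with $\overline{C}$.
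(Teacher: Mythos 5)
Your proof is correct, and for inequality \eqref{eq:27} it is essentially the paper's argument in different clothing: the paper applies the Banach perturbation lemma to obtain $\|(A-D)^{-1}\|_2 \le \|A^{-1}\|_2/(1-\|A^{-1}\|_2\|D\|_2) \le \|A^{-1}\|_2/(1-\|A^{-1}\|_2)$, which is numerically the same intermediate bound as your $1/(\sigma_{\min}(A)-1)$, and then, like you, reduces the comparison with the right-hand side to the inequality $\sigma_{\min}(A)+1 \le \|A+I\|_2+\|A-I\|_2$; the only difference is that the paper verifies this reduction on the inverse side, via $1+\|A^{-1}\|_2 < 2 = \|(I+A^{-1})+(I-A^{-1})\|_2 \le \|I+A^{-1}\|_2+\|I-A^{-1}\|_2 \le \|A^{-1}\|_2(\|A+I\|_2+\|A-I\|_2)$, whereas you verify it directly via $\|A+I\|_2+\|A-I\|_2 \ge \|2A\|_2 = 2\sigma_{\max}(A) \ge 2\sigma_{\min}(A) > \sigma_{\min}(A)+1$. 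Where you genuinely depart from the paper is \eqref{eq:26}: the paper invokes convexity of $D \mapsto \|A-D\|_2$ over the cube $\{d_i\in[-1,1]\}$ and asserts that the maximum, being attained at a vertex, equals $\max\{\|A+I\|_2,\|A-I\|_2\}$; this is imprecise, since the vertices of the cube are all $2^n$ mixed-sign matrices $D=\mathrm{diag}(\pm 1)$, not only $\pm I$, so the asserted equality does not follow as stated (though the weaker inequality \eqref{eq:26} still holds). Your splitting $A-D=\tfrac{I+D}{2}(A-I)+\tfrac{I-D}{2}(A+I)$, with both diagonal factors of spectral norm at most one, yields \eqref{eq:26} in one line, requires no vertex enumeration and no hypothesis on $\sigma_{\min}(A)$, and thus actually repairs the one shaky step in the paper's own proof. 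Your closing caution about the naive route through $\sigma_{\min}(A-D)^2\ge\sigma_{\min}(A)^2-1$ is well taken, but neither your proof nor the paper's needs that estimate.
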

\begin{proof} Firstly, we prove (\ref{eq:26}). Since
\begin{align} \label{eq:28}
\max\|A-D\|_{2}&=\max\|A +I-I-D\|_{2}\nonumber\\
&\leq\max(\|A +I\|_{2}+\|I+D\|_{2})\nonumber\\
&=\|A +I\|_{2}+\max\|I+D\|_{2}\nonumber\\
&=2+\|A +I\|_{2}
\end{align}
and
\begin{align} \label{eq:29}
\max\|A-D\|_{2}\leq 2+\|A -I\|_{2}.
\end{align}
Combining (\ref{eq:28}) and (\ref{eq:29}) leads to
\[
\max\|A-D\|_{2}\leq2+\frac{\|A -I\|_{2}+\|A +I\|_{2}}{2}\leq\|A
-I\|_{2}+\|A +I\|_{2},
\]
the latter inequality holds because
\[
2=\|2I\|_{2}=\|I+A+I-A\|_{2} \leq\|I+A\|_{2}+\|I-A\|_{2}.
\]
This shows that (\ref{eq:26}) holds.

Next, we prove (\ref{eq:27}). It only needs to prove
\[
(\sigma_{\min}(A)^{2}-1)\max\|(A-D)^{-1}\|_{2} \leq\|A
+I\|_{2}+\|A-I\|_{2}.
\]
From $\sigma_{\min}(A)>1$, we know $\|A^{-1}\|_{2}<1$. Further,
$\|A^{-1}\|_{2}\|D\|_{2}<1$.  By Banach Perturbation Lemma in
\cite{Ortega}, clearly,
\[
\|(A-D)^{-1}\|_{2}\leq\frac{\|A^{-1}\|_{2}}{1-\|A^{-1}\|_{2}\|D\|_{2}}\leq\frac{\|A^{-1}\|_{2}}{1-\|A^{-1}\|_{2}}.
\]
By the simple computation, we have
\begin{align*}
\|A^{-1}\|_{2}(\sigma_{\min}(A)^{2}-1)=&\|A^{-1}\|_{2}(\frac{1}{\sigma_{\max}(A^{-1})^{2}}-1)\\
=&\|A^{-1}\|_{2}(\frac{1}{\|A^{-1}\|_{2}^{2}}-1)\\
=&\frac{1-\|A^{-1}\|_{2}^{2}}{\|A^{-1}\|_{2}}.
\end{align*}
So, we get
\[
(\sigma_{\min}(A)^{2}-1)\max\|(A-D)^{-1}\|_{2} \leq
1+\|A^{-1}\|_{2},
\]
this shows that then we only show that
\[
1+\|A^{-1}\|_{2}\leq|A +I\|_{2}+\|A-I\|_{2}.
\]
In fact, together with $\|A^{-1}\|_{2}\leq1$, we have
\begin{align*}
1+\|A^{-1}\|_{2}\leq& 2=\|2I\|_{2}=\|I+A^{-1}+I-A^{-1}\|_{2}\\
\leq&\|I+A^{-1}\|_{2}+\|I-A^{-1}\|_{2}\\
=&\|A^{-1}(I+A)\|_{2}+\|A^{-1}(I-A)\|_{2}\\
\leq&\|A^{-1}\|_{2}(\|A +I\|_{2}+\|A-I\|_{2})\\
\leq&\|A +I\|_{2}+\|A-I\|_{2}.
\end{align*}
This proves (\ref{eq:27}). \end{proof}

From the proof of Theorem 2.4, we find some interesting results for
the matrix norm, see Proposition 2.1.

\begin{prop} The following statements hold:
\begin{description}
\item  $ (1)$ For $A\in \mathbb{R}^{n\times n}$ and  $\alpha>0$ in $\mathbb{R}$,
\[
2\alpha-\|A\| \leq\|\alpha I+A\|+\|\alpha I-A\|.
\]
\item  $ (2)$ Let  $A\in \mathbb{R}^{n\times n}$ and  $\alpha\geq0$ in $\mathbb{R}$. Then for  $\|A\| \leq \alpha$,
\[
\alpha+\|A\| \leq\|\alpha I+A\|+\|\alpha I-A\|.
\]
\end{description}
\end{prop}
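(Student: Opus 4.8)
The plan is to derive both inequalities from a single master estimate,
\[
2\alpha \le \|\alpha I + A\| + \|\alpha I - A\|,
\]
which is itself nothing more than the triangle inequality applied to a well-chosen splitting. The crux is the elementary algebraic identity
\[
(\alpha I + A) + (\alpha I - A) = 2\alpha I,
\]
which is precisely the device already used in the closing step of the proof of Theorem 2.4, where $A^{-1}$ played the role of $A$ and $\alpha=1$. Once this identity is in hand, the triangle inequality yields $\|2\alpha I\| \le \|\alpha I + A\| + \|\alpha I - A\|$, and since the induced $p$-norm satisfies $\|I\| = 1$, the left-hand side equals $\|2\alpha I\| = 2\alpha\|I\| = 2\alpha$ (using $\alpha>0$ in part (1), or $\alpha\ge 0$ in part (2)). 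This establishes the master inequality.

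With $2\alpha \le \|\alpha I + A\| + \|\alpha I - A\|$ secured, each claim follows by bounding its own left-hand side by $2\alpha$. For part (1), I would invoke only the nonnegativity of the norm, $\|A\|\ge 0$, which gives $2\alpha - \|A\| \le 2\alpha$; chaining this with the master inequality produces $2\alpha - \|A\| \le \|\alpha I + A\| + \|\alpha I - A\|$, as required. For part (2), I would instead invoke the standing hypothesis $\|A\| \le \alpha$, which gives $\alpha + \|A\| \le \alpha + \alpha = 2\alpha$; chaining again with the master inequality completes that case. Thus the two parts differ only in how the hypothesis is used to bound the left-hand side against the common quantity $2\alpha$.

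There is no genuine obstacle to overcome here: the entire content is the recognition that $\alpha I + A$ and $\alpha I - A$ sum to $2\alpha I$, after which each statement is a one-line consequence of the triangle inequality. The single point meriting a word of care is the evaluation $\|2\alpha I\| = 2\alpha$, which relies on $\|I\| = 1$ for the induced $p$-norm together with $\alpha \ge 0$; I would state this explicitly so that the argument remains self-contained and the reader sees clearly why the constant $2\alpha$, rather than some norm-dependent factor, governs both bounds.
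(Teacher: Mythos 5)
Your proof is correct and follows essentially the same route the paper intends: the paper derives Proposition 2.1 from the closing step of the proof of Theorem 2.4, which is exactly your splitting $(\alpha I+A)+(\alpha I-A)=2\alpha I$ followed by the triangle inequality, with the hypotheses $\|A\|\ge 0$ (part (1)) or $\|A\|\le\alpha$ (part (2)) used only to bound the left-hand side by $2\alpha$.
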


\subsection{Estimations of $\overline{\alpha}$, $\overline{\beta}$  and $\overline{\gamma}$}
By observing $\overline{\alpha}$, $\overline{\beta}$  and
$\overline{\gamma}$, since they contain any $D=diag(d_{i})$ with
$d_{i}\in [-1,1]$, it is difficult to directly compute quantities
$\overline{\alpha}$, $\overline{\beta}$  and $\overline{\gamma}$. To
overcome this disadvantage, in this subsection, we explore some
computable estimations for $\overline{\alpha}$, $\overline{\beta}$
and $\overline{\gamma}$.

In the following, we focus on estimating the value of
$\overline{\alpha}$. For $\overline{\beta}$, its process is
completely analogical, with regard to $\overline{\gamma}$ just for
their special case. To present the reasonable estimations for
$\overline{\alpha}$,  here we consider three aspects: (1)
$\rho(|A^{-1}B|)<1$; (2) $\sigma_{\min}(A)>\sigma_{\max}(B)$; (3)
$\sigma_{max}(A^{-1}B)<1$. For these three cases, the AVEs
(\ref{p1}) for any $b\in \mathbb{R}^{n}$ has a unique solution, see
Theorem 2 in \cite{Rohn14}, Theorem 2.1  in \cite{Wu20} and
Corollary 3.2 in \cite{Wu21b}. Similarly, for $\overline{\beta}$, we
display three aspects: (1) $\rho(|BA^{-1}|)<1$; (2)
$\sigma_{\min}(A)>\sigma_{\max}(B)$; (3) $\sigma_{max}(BA^{-1})<1$.
For these three cases, the AVEs (\ref{eq:2}) for any $b\in
\mathbb{R}^{n}$ has a unique solution as well, see Corollary 3.3  in
\cite{Wu21a},  Lemma 2.2 in \cite{Li22} and  Corollary 3.2 in
\cite{Wu21a}.

\subsubsection{Case I}

Assume that matrices $A$ and $B$ in (\ref{p1}) satisfy
\[
\rho(|A^{-1}B|)<1.
\]
We can present a reasonable estimation for $\overline{\alpha}$, see
Theorem 2.5.

\begin{theorem}
Let $\rho(|A^{-1}B|)<1$ in \emph{(\ref{p1})}. Then
\begin{equation}\label{eq:211}
\overline{\alpha}\leq\|(I-|A^{-1}B|)^{-1}\|\|A^{-1}\|.
\end{equation}
\end{theorem}
\begin{proof} Since $A^{-1}BD\leq |A^{-1}BD| \leq|A^{-1}B|$ for any $D = diag(d_{i})$ with $d_{i}\in
[-1, 1]$,  by Theorem 8.1.18 of \cite{Horn}, we get
\[
\rho(A^{-1}BD)\leq\rho(|A^{-1}BD|)\leq\rho(|A^{-1}B|)<1.
\]
So
\begin{align*}
|(I-A^{-1}BD)^{-1}|=&|I+(A^{-1}BD)+(A^{-1}BD)^{2}+...|\\
\leq&I+(|A^{-1}BD|)+(|A^{-1}BD|)^{2}+...\\
\leq&I+(|A^{-1}B|)+(|A^{-1}B|)^{2}+...\\
=&(I-|A^{-1}B|)^{-1}.
\end{align*}
Combining
\[
\|(A-BD)^{-1}\|=\|(I-A^{-1}BD)^{-1}A^{-1}\|\leq\|(I-A^{-1}BD)^{-1}\|\|A^{-1}\|
\]
and
\[
\|(I-A^{-1}BD)^{-1}\|\leq\||(I-A^{-1}BD)^{-1}|\|\leq\|(I-|A^{-1}B|)^{-1}\|,
\]
the desired bound (\ref{eq:211}) can be gained.  \end{proof}

Similar to the proof of Theorem 2.5, for $\overline{\beta}$,  we
have
\begin{theorem}
Let $\rho(|BA^{-1}|)<1$ in \emph{(\ref{eq:2})}. Then
\[
\overline{\beta}\leq\|A^{-1}\|\|(I-|BA^{-1}|)^{-1}\|.
\]
\end{theorem}

Needless to say, for $\overline{\gamma}$, we have

\begin{cor}
Let $\rho(|A^{-1}|)<1$ in \emph{(\ref{p3})}. Then
\[
\overline{\gamma}\leq\|A^{-1}\|\|(I-|A^{-1}|)^{-1}\|.
\]
\end{cor}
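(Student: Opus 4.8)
The plan is to recognize that Corollary 2.2 is simply the specialization $B=I$ of Theorem 2.5, and that the entire machinery of the proof of Theorem 2.5 applies verbatim once we substitute $B=I$. Indeed, setting $B=I$ in the hypothesis $\rho(|A^{-1}B|)<1$ yields exactly $\rho(|A^{-1}|)<1$, which is the hypothesis of Corollary 2.2, while the conclusion $\overline{\alpha}\leq\|(I-|A^{-1}B|)^{-1}\|\|A^{-1}\|$ becomes $\overline{\gamma}\leq\|(I-|A^{-1}|)^{-1}\|\|A^{-1}\|$, which (using commutativity of scalar factors in the norm product) is precisely the asserted bound. Since $\overline{\gamma}$ is by definition $\overline{\alpha}$ with $B=I$, no new argument is required beyond this substitution.

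To spell out the steps for completeness, I would first note that for any diagonal $D=\mathrm{diag}(d_i)$ with $d_i\in[-1,1]$ we have the entrywise domination $A^{-1}D\leq|A^{-1}D|\leq|A^{-1}|$. Invoking Theorem 8.1.18 of \cite{Horn} (monotonicity of the spectral radius under entrywise absolute value and domination), this gives $\rho(A^{-1}D)\leq\rho(|A^{-1}|)<1$, so the Neumann series for $(I-A^{-1}D)^{-1}$ converges. Next, I would bound the series entrywise by
\[
|(I-A^{-1}D)^{-1}|\leq\sum_{k=0}^{\infty}|A^{-1}|^{k}=(I-|A^{-1}|)^{-1},
\]
where the final equality again uses $\rho(|A^{-1}|)<1$. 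Finally, combining the factorization $(A-D)^{-1}=(I-A^{-1}D)^{-1}A^{-1}$ with the norm monotonicity $\|(I-A^{-1}D)^{-1}\|\leq\||(I-A^{-1}D)^{-1}|\|\leq\|(I-|A^{-1}|)^{-1}\|$ and taking the maximum over all admissible $D$ yields the claimed estimate for $\overline{\gamma}$.

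There is essentially no obstacle here, since this is a direct corollary obtained by setting $B=I$; the only point requiring minor care is the appeal to monotonicity of the spectral radius and of the $p$-norm under entrywise absolute value and domination, which are standard (the latter holds because $\|\cdot\|_p$ is a monotone, absolute norm on $\mathbb{R}^{n\times n}$ for the relevant comparisons). Accordingly, I would present the proof as a one-line remark that the result follows from Theorem 2.5 with $B=I$, rather than repeating the Neumann-series computation.
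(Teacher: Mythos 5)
Your proposal is correct and matches the paper exactly: the paper states this corollary without proof as an immediate consequence of Theorem 2.5 with $B=I$ (``Needless to say, for $\overline{\gamma}$\ldots''), and your spelled-out Neumann-series argument is precisely the proof of Theorem 2.5 specialized to that case. No discrepancy to report.
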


\subsubsection{Case II}

If $\sigma_{\min}(A)>\sigma_{\max}(B)$ in (\ref{p1}), then for
$\overline{\alpha}$ we have Theorem 2.7.
\begin{theorem}
Let $\sigma_{\min}(A)>\sigma_{\max}(B)$  in \emph{(\ref{p1})}. Then
\begin{equation}\label{eq:12}
\overline{\alpha}\leq\frac{1}{\sigma_{\min}(A)-\sigma_{\max}(B)}.
\end{equation}
\end{theorem}
\begin{proof}  For any diagonal matrix $D = diag(d_{i})$ with $d_{i}\in
[-1, 1]$,
\[
\sigma_{\min}(A)-\sigma_{\max}(BD)\geq\sigma_{\min}(A)-\sigma_{\max}(B)\sigma_{\max}(D)\geq\sigma_{\min}(A)-\sigma_{\max}(B)
\]
and
\[
\sigma_{\min}(A-BD)\geq\sigma_{\min}(A)-\sigma_{\max}(BD).
\]
So
\begin{align*}
\|(A-BD)^{-1}\|_{2}=&\sigma_{\max}((A-BD)^{-1})\\
=&\frac{1}{\sigma_{\min}(A-BD)}\\
\leq&\frac{1}{\sigma_{\min}(A)-\sigma_{\max}(BD)}\\
\leq&\frac{1}{\sigma_{\min}(A)-\sigma_{\max}(B)}.
\end{align*}
This completes the proof for Theorem 2.7. \end{proof}

For $\overline{\beta}$,  we have the same as the result presented in
Theorem 2.8.

\begin{theorem}
Let $\sigma_{\min}(A)>\sigma_{\max}(B)$ in \emph{(\ref{eq:2})}. Then
\[
\overline{\beta}\leq\frac{1}{\sigma_{\min}(A)-\sigma_{\max}(B)}.
\]
\end{theorem}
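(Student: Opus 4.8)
The plan is to mirror the argument of Theorem 2.7 essentially verbatim, the only change being that the perturbation now multiplies $B$ on the left, giving $A-DB$, rather than on the right. Since the spectral norm is submultiplicative, the order of the product plays no role in the singular-value estimates, so the same chain of inequalities carries over unchanged.

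First I would fix an arbitrary diagonal matrix $D=\mathrm{diag}(d_i)$ with $d_i\in[-1,1]$ and record that $\sigma_{\max}(D)=\max_i|d_i|\le 1$. Then, invoking the submultiplicativity $\sigma_{\max}(DB)\le\sigma_{\max}(D)\,\sigma_{\max}(B)$, I obtain
\[
\sigma_{\max}(DB)\le\sigma_{\max}(B).
\]
Next I would use the standard Weyl-type perturbation inequality for the smallest singular value, $\sigma_{\min}(A-DB)\ge\sigma_{\min}(A)-\sigma_{\max}(DB)$, and combine it with the previous bound to get
\[
\sigma_{\min}(A-DB)\ge\sigma_{\min}(A)-\sigma_{\max}(B)>0,
\]
the positivity being guaranteed by the hypothesis $\sigma_{\min}(A)>\sigma_{\max}(B)$. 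In particular $A-DB$ is nonsingular for every admissible $D$, which is consistent with the unique-solvability condition for (\ref{eq:2}) recalled in the Introduction.

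Finally, expressing the spectral norm of the inverse as the reciprocal of the smallest singular value,
\[
\|(A-DB)^{-1}\|_{2}=\frac{1}{\sigma_{\min}(A-DB)}\le\frac{1}{\sigma_{\min}(A)-\sigma_{\max}(B)},
\]
and taking the maximum over all such $D$ yields the claimed bound on $\overline{\beta}$. I do not anticipate any genuine obstacle: the proof is routine once the two singular-value inequalities are in place, and both are classical. The only point deserving a moment's care is confirming that left multiplication by $D$ is handled identically to the right multiplication in Theorem 2.7, which it is, precisely because $\|DB\|_{2}\le\|D\|_{2}\|B\|_{2}$ holds regardless of the order of the factors.
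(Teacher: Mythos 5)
Your proof is correct and follows exactly the paper's approach: the paper proves Theorem 2.7 (the $A-BD$ case) via the same two singular-value inequalities and then states this result as an immediate analogue, which is precisely the adaptation you carried out. Your additional remark that left versus right multiplication by $D$ makes no difference, since $\sigma_{\max}(DB)\le\sigma_{\max}(D)\sigma_{\max}(B)\le\sigma_{\max}(B)$, is exactly the point the paper leaves implicit.
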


\begin{cor}
Let $\sigma_{\min}(A)>1$ in \emph{(\ref{p3})}. Then
\[
\overline{\gamma}\leq\frac{1}{\sigma_{\min}(A)-1}.
\]
\end{cor}

It is easy to see that the upper bound in Corollary 2.3 is still
sharper than that in Theorem 2.3, i.e.,
\[
\frac{1}{\sigma_{\min}(A)-1}\leq \frac{\|A
+I\|_{2}+\|A-I\|_{2}}{\sigma_{\min}(A)^{2}-1},
\]
which is equal to
\[
\sigma_{\min}(A)+1\leq \|A +I\|_{2}+\|A-I\|_{2}.
\]
In fact, by using Proposition 2.1, we have
\[
1+\|A^{-1}\|_{2}\leq\|I+A^{-1}\|_{2}+\|I-A^{-1}\|_{2}\leq\|A^{-1}\|_{2}(\|A
+I\|_{2}+\|A-I\|_{2}).
\]

\begin{remark}
The conditions in Theorems 2.5 and 2.7 are not included each other,
e.g., taking
\[
A=\left[\begin{array}{ccc}
1&0\\
0&1\\
\end{array}\right], B=\left[\begin{array}{ccc}
0.9&-0.4\\
0.4&0.9\\
\end{array}\right].
\]
By the simple computation,
\[
\sigma_{\min}(A)=1>\sigma_{\max}(B)=0.9849,
\]
but
\[
\rho(|A^{-1}B|)=1.3>1.
\]
This shows that matrices $A$ and $B$ satisfy the condition of
Theorem 2.7, do not satisfy the condition of  Theorem 2.5. Now we
take
\[
A=\left[\begin{array}{ccc}
2&1\\
0&2\\
\end{array}\right],\ B=\left[\begin{array}{ccc}
1.6&0\\
0&1.6\\
\end{array}\right].
\]
By the simple computation,
\[
\rho(|A^{-1}B|)=0.8000<1,
\]
but
\[
\sigma_{\min}(A)= 1.5616<\sigma_{\max}(B)=1.6.
\]
This shows that matrices $A$ and $B$ satisfy the condition of
Theorem 2.5, do not satisfy the condition of Theorem 2.7.
\end{remark}

\subsubsection{Case III}
Here, we consider this case that $B$ is nonsingular and
$\sigma_{max}(A^{-1}B)<1$ in (\ref{p1}). Based on this, for
$\overline{\alpha}$ we have Theorem 2.9.

\begin{theorem}
Let  $B$ be nonsingular and $\sigma_{max}(A^{-1}B)<1$  in
\emph{(\ref{p1})}. Then
\begin{equation}\label{eq:213}
\overline{\alpha}\leq\frac{\|A^{-1}B\|_{2}\|B^{-1}\|_{2}}{1-\|A^{-1}B\|_{2}}.
\end{equation}
\end{theorem}
\begin{proof} From Corollary 3.2 in \cite{Wu21b}, $A+BD$ is
nonsingular for any diagonal matrix $D = diag(d_{i})$ with $d_{i}\in
[-1, 1]$ under the assumptions. So, we have
\[
\|(A-BD)^{-1}\|_{2}=\|(B^{-1}A-D)^{-1}B^{-1}\|_{2}\leq\|(B^{-1}A-D)^{-1}\|_{2}\|B^{-1}\|_{2}.
\]
Noting that  $\sigma_{max}(A^{-1}B)<1$ is equal to
$\|A^{-1}B\|_{2}<1$ and
\[
\|A^{-1}BD\|_{2}\leq\|A^{-1}B\|_{2}\|D\|_{2}<1.
\]
Making use of Banach perturbation lemma in \cite{Ortega} leads to
\begin{align*}
\|(B^{-1}A-D)^{-1}\|_{2}\leq&\frac{\|A^{-1}B\|_{2}}{1-\|A^{-1}B\|_{2}\|D\|_{2}}\leq\frac{\|A^{-1}B\|_{2}}{1-\|A^{-1}B\|_{2}}.
\end{align*}
Therefore, the proof of Theorem 2.9 is completed.  \end{proof}

For $\overline{\beta}$,  we have

\begin{theorem}
Let $B$ be nonsingular and $\sigma_{max}(BA^{-1})<1$ in
\emph{(\ref{eq:2})}. Then
\[
\overline{\beta}\leq\frac{\|B^{-1}\|_{2}\|BA^{-1}\|_{2}}{1-\|BA^{-1}\|_{2}}.
\]
\end{theorem}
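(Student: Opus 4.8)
The plan is to mirror the proof of Theorem 2.9 exactly, exploiting the symmetry between the two types of AVEs. Recall that Theorem 2.9 handles $\overline{\alpha}=\max\{\|(A-BD)^{-1}\|:D=diag(d_i),\ d_i\in[-1,1]\}$ by factoring $A-BD=B(B^{-1}A-D)$. For $\overline{\beta}$ the relevant matrix is $A-DB$, so I would instead factor it on the right as $A-DB=(AB^{-1}-D)B$. Then for any diagonal $D=diag(d_i)$ with $d_i\in[-1,1]$ I would write
\[
\|(A-DB)^{-1}\|_2=\|B^{-1}(AB^{-1}-D)^{-1}\|_2\leq\|B^{-1}\|_2\|(AB^{-1}-D)^{-1}\|_2.
\]

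First I would record that the hypothesis $\sigma_{\max}(BA^{-1})<1$ is equivalent to $\|BA^{-1}\|_2<1$, and that this guarantees (via Corollary 3.3 in \cite{Wu21a}, or the uniqueness criterion cited for case III) that $A-DB$ is nonsingular for every admissible $D$, so the inverse above makes sense. Next I would control $\|(AB^{-1}-D)^{-1}\|_2$ by the Banach perturbation lemma from \cite{Ortega}: since $\|BA^{-1}\|_2<1$ and $\|D\|_2\leq1$, the product $\|BA^{-1}D\|_2\leq\|BA^{-1}\|_2\|D\|_2<1$, and hence
\[
\|(AB^{-1}-D)^{-1}\|_2\leq\frac{\|BA^{-1}\|_2}{1-\|BA^{-1}\|_2\|D\|_2}\leq\frac{\|BA^{-1}\|_2}{1-\|BA^{-1}\|_2}.
\]
Here I would treat $AB^{-1}-D$ as a perturbation of $(BA^{-1})^{-1}$, noting $(AB^{-1})^{-1}=BA^{-1}$, so the lemma applies with the small quantity being $\|BA^{-1}\|_2$.

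Combining the two displayed inequalities gives $\|(A-DB)^{-1}\|_2\leq\dfrac{\|B^{-1}\|_2\|BA^{-1}\|_2}{1-\|BA^{-1}\|_2}$ for every admissible $D$, and taking the maximum over $D$ yields the stated bound on $\overline{\beta}$. The only point requiring care — the analogue of the main obstacle in Theorem 2.9 — is getting the side on which $B$ is factored consistent with the $DB$ (rather than $BD$) structure of equation (\ref{eq:2}): one must factor on the right so that the residual matrix is $AB^{-1}-D$ and the Banach lemma is applied to $AB^{-1}-D=(BA^{-1})^{-1}-D$, producing $\|BA^{-1}\|_2$ in the numerator exactly as in the claimed formula. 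Everything else is a routine transcription of the submultiplicative norm estimates already used in the proof of Theorem 2.9.
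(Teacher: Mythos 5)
Your proposal is correct and is exactly the argument the paper intends: Theorem 2.10 is stated without proof as the analogue of Theorem 2.9, and your right-factorization $A-DB=(AB^{-1}-D)B$, followed by the Banach perturbation lemma applied to $AB^{-1}-D$ with $(AB^{-1})^{-1}=BA^{-1}$, is the natural transcription of the paper's left-factorization $A-BD=B(B^{-1}A-D)$. Note also that the Banach lemma itself already yields the nonsingularity of $A-DB$ for every admissible $D$, so the appeal to the unique-solvability citation is not even needed.
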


\begin{cor}
Let $\sigma_{max}(A^{-1})<1$   in \emph{(\ref{p3})}. Then
\[
\overline{\gamma}\leq\frac{\|A^{-1}\|_{2}}{1-\|A^{-1}\|_{2}}.
\]
\end{cor}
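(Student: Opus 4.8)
The plan is to obtain Corollary 2.4 as the direct specialization of Theorem 2.9 (equivalently Theorem 2.10) to the case $B=I$. Recall that the AVE (\ref{p3}) is exactly the AVE (\ref{p1}) with $B=I$, so that the quantity $\overline{\gamma}$ is precisely $\overline{\alpha}$ evaluated at $B=I$. Substituting $B=I$ into the hypothesis $\sigma_{\max}(A^{-1}B)<1$ of Theorem 2.9 produces the hypothesis $\sigma_{\max}(A^{-1})<1$ assumed here, while substituting $B=I$ into the bound (\ref{eq:211}) yields $\overline{\gamma}\leq \frac{\|A^{-1}\|_2\,\|I\|_2}{1-\|A^{-1}\|_2}$; since $\|I\|_2=1$, this is exactly the claimed estimate. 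The only point worth a remark is that $\sigma_{\max}(A^{-1})<1$ is just the statement $\|A^{-1}\|_2<1$, because the $2$-norm of a matrix coincides with its largest singular value.

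For completeness I would also record the short self-contained argument, which mirrors the proof of Theorem 2.9. First I would note that $\overline{\gamma}=\max\{\|(A-D)^{-1}\|_2 : D=diag(d_i),\ d_i\in[-1,1]\}$ and that every such $D$ satisfies $\|D\|_2\leq 1$. From $\|A^{-1}\|_2<1$ I would then deduce $\|A^{-1}D\|_2\leq \|A^{-1}\|_2\|D\|_2\leq \|A^{-1}\|_2<1$, so that $I-A^{-1}D$ is invertible. Writing $(A-D)^{-1}=(I-A^{-1}D)^{-1}A^{-1}$ and invoking the Banach perturbation lemma of \cite{Ortega} gives
\[
\|(A-D)^{-1}\|_2\leq \|(I-A^{-1}D)^{-1}\|_2\,\|A^{-1}\|_2\leq \frac{\|A^{-1}\|_2}{1-\|A^{-1}D\|_2}\leq \frac{\|A^{-1}\|_2}{1-\|A^{-1}\|_2}.
\]
Taking the maximum over all admissible $D$ produces the stated bound.

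There is essentially no serious obstacle in this corollary: it is a routine specialization, and the right-hand side of the displayed chain is independent of $D$, so passing to the maximum is immediate. The only mild care required is bookkeeping, namely confirming that setting $B=I$ sends $\|B^{-1}\|_2$ and $\|A^{-1}B\|_2$ to $1$ and $\|A^{-1}\|_2$, respectively, and recalling the identity $\|A^{-1}\|_2=\sigma_{\max}(A^{-1})$, which links the stated hypothesis to the form in which the Banach perturbation lemma is applied.
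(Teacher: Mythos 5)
Your proposal is correct and matches the paper's intent exactly: the paper states this corollary without a separate proof, as the immediate specialization of Theorem 2.9 to $B=I$ (where nonsingularity of $B$ is automatic and $\|I\|_2=1$ collapses the bound (\ref{eq:211}) to the stated form). Your self-contained argument via $(A-D)^{-1}=(I-A^{-1}D)^{-1}A^{-1}$ and the Banach perturbation lemma is just the proof of Theorem 2.9 rewritten with $B=I$, so it is the same approach, not a different route.
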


Usually, Corollary 2.4 is equal to Corollary 2.3.

\begin{remark}
Comparing Theorem 2.7 with Theorem 2.9, it is easy to find that
\[
\sigma_{max}(A^{-1}B)\leq\sigma_{max}(A^{-1})\sigma_{max}(B)=\frac{\sigma_{max}(B)}{\sigma_{min}(A)}.
\]
Whereas, it does not show that Theorem 2.9 is weaker than Theorem
2.7 because Theorem 2.9 asks for $B$ being nonsingular. Besides, we
need to point out that  Theorem 2.9 sometimes performs better than
Theorem 2.7, vice versa. To illustrate this, we take
\[
A=\left[\begin{array}{ccc}
2&0\\
0&3\\
\end{array}\right],\ B=\left[\begin{array}{ccc}
1&0\\
0&1.5\\
\end{array}\right].
\]
Obviously, $B$ is nonsingular,
\[
\sigma_{max}(A^{-1}B)=0.5<1, \sigma_{min}(A)=2>\sigma_{max}(B)=1.5.
\]
This shows that the conditions of Theorem 2.7 and Theorem 2.9 are
satisfied. From Theorem 2.7 and Theorem 2.9, we have
\[
\frac{\|A^{-1}B\|_{2}\|B^{-1}\|_{2}}{1-\|A^{-1}B\|_{2}}=
1<\frac{1}{\sigma_{min}(A)-\sigma_{\max}(B)}=2,
\]
from which  shows that the upper bound in Theorem 2.9 is sharper
than that in Theorem 2.7. Now, we take
\[
A=\left[\begin{array}{ccc}
2&0\\
0&3\\
\end{array}\right],\ B=\left[\begin{array}{ccc}
1.5&0\\
0&1\\
\end{array}\right].
\]
Likewise, $B$ is nonsingular,
\[
\sigma_{max}(A^{-1}B)=0.75<1, \sigma_{min}(A)=2>\sigma_{max}(B)=1.5.
\]
This implies that the conditions of Theorem 2.7 and Theorem 2.9 are
satisfied as well. From Theorem 2.7 and Theorem 2.9, we have
\[
\frac{\|A^{-1}B\|_{2}\|B^{-1}\|_{2}}{1-\|A^{-1}B\|_{2}}=
5>\frac{1}{\sigma_{min}(A)-\sigma_{\max}(B)}=2,
\]
which implies that the upper bound in Theorem 2.7 is sharper than
that in Theorem 2.9.
\end{remark}

\begin{remark} Although  Zamani and Hlad\'{\i}k
in \cite{Zamani} presented some estimations for $\overline{\gamma}$,
their results are either impossible to achieve or difficult to
calculate, see Theorem 8 and Proposition 11 in \cite{Zamani},
respectively.

\end{remark}
\section{Perturbation bound}
In this section, we focus on the perturbation analysis of AVEs when
 $A, B$ and $b$ are perturbed. For instance, for the AVEs
(\ref{p1}), when $\Delta A$, $\Delta B$ and $\Delta b$ are the
perturbation terms of $ A$, $B$ and $b$, respectively, how do we
characterize the change in the solution of the following perturbed
AVEs (\ref{p4})
\begin{equation}\label{p4}
(A+\Delta A)y-(B+\Delta B)|y|=b+\Delta b.
\end{equation}

For the AVEs (\ref{p1}), firstly,  we consider the following special
case
\begin{equation}\label{p5}
Ay-B|y|=b+\Delta b.
\end{equation}
Assume that the AVEs (\ref{p5}) has the unique solution $y^{\ast}$.
Let $x^{\ast}$ be the unique solution of AVEs (\ref{p1}).
Subtracting (\ref{p1}) from (\ref{p5}), we have
\[
Ax^{\ast}-B|x^{\ast}|- (Ay^{\ast}-B|y^{\ast}|)=-\Delta b,
\]
which is equal to
\begin{equation}\label{eq:33}
x^{\ast}-y^{\ast}=-(A-B\tilde{D})^{-1}\Delta b,
\end{equation}
where $\tilde{D}=diag(\tilde{d}_{i})$ with $\tilde{d}_{i}\in
[-1,1]$. Making use of the norm for both sides of (\ref{eq:33}),
noting that
\begin{equation}\label{eq:34}
\|(A-B\tilde{D})^{-1}\|\leq\max \|(A-BD)^{-1}\|
\end{equation}
for any $D=diag(d_{i})$ with $d_{i}\in [-1,1]$,  we have
\begin{equation}\label{eq:35}
\|x^{\ast}-y^{\ast}\|\leq\max \|(A-BD)^{-1}\|\|\Delta b\|.
\end{equation}
Moreover, from the AVEs (\ref{p1}) with $x^{\ast}$, it is easy to
check that
\begin{equation}\label{eq:36}
\frac{\|b\|}{\|x^{\ast}\|}\leq\|A\|+\|B\|.
\end{equation}
Combining (\ref{eq:35}) with (\ref{eq:36}), for any $D=diag(d_{i})$
with $d_{i}\in [-1,1]$, we obtain
\begin{align*}
\frac{\|x^{\ast}-y^{\ast}\|}{\|x^{\ast}\|}\leq&\frac{\max
\|(A-BD)^{-1}\|\|\Delta
b\|}{\|x^{\ast}\|}=\frac{\max\|(A-BD)^{-1}\|\|\Delta
b\|}{\|b\|}\frac{\|b\|}{\|x^{\ast}\|}\\
\leq& \frac{\max \|(A-BD)^{-1}\|\|\Delta b\|}{\|b\|}(\|A\|+\|B\|),
\end{align*}
from which we immediately get

\begin{theorem}
Let $x^{\ast}$, $y^{\ast}$ be the unique solutions of AVEs
(\ref{p1}) and (\ref{p5}), respectively. Then
\begin{align*}
\frac{\|x^{\ast}-y^{\ast}\|}{\|x^{\ast}\|} \leq
\frac{\overline{\alpha}\|\Delta b\|}{\|b\|}(\|A\|+\|B\|),
\end{align*}
where $\overline{\alpha}$ is defined in Theorem 2.1.
\end{theorem}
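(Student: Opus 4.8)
The plan is to reuse the residual-difference linearization behind Theorem 2.1, now applied to the gap between $x^{\ast}$ and $y^{\ast}$. First I would subtract the unperturbed equation (\ref{p1}) from the perturbed one (\ref{p5}) to obtain
\[
A(x^{\ast}-y^{\ast})-B\bigl(|x^{\ast}|-|y^{\ast}|\bigr)=-\Delta b,
\]
and then apply Lemma 2.1 with the two vectors $x^{\ast}$ and $y^{\ast}$ to write $|x^{\ast}|-|y^{\ast}|=\tilde{D}(x^{\ast}-y^{\ast})$ for some diagonal $\tilde{D}=\mathrm{diag}(\tilde{d}_i)$ with $\tilde{d}_i\in[-1,1]$. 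This collapses the nonlinear term into a linear one and gives $(A-B\tilde{D})(x^{\ast}-y^{\ast})=-\Delta b$. Because the standing uniqueness hypothesis forces $A-BD$ to be nonsingular for every admissible $D$ (Theorem 3.2 of \cite{Wu21b}), I may invert to reach the explicit relation (\ref{eq:33}).

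The second step is to pass to norms. Taking $\|\cdot\|$ on both sides of (\ref{eq:33}) and noting that $\tilde{D}$ is itself one admissible diagonal matrix, its inverse norm is bounded by the maximum over all such matrices, that is $\|(A-B\tilde{D})^{-1}\|\leq\overline{\alpha}$ by the very definition of $\overline{\alpha}$ in Theorem 2.1; this is exactly (\ref{eq:34})--(\ref{eq:35}) and yields the absolute estimate $\|x^{\ast}-y^{\ast}\|\leq\overline{\alpha}\,\|\Delta b\|$. To turn this into the relative bound in the statement, I would estimate $\|x^{\ast}\|$ from below: starting from $b=Ax^{\ast}-B|x^{\ast}|$, the triangle inequality together with the identity $\||x^{\ast}|\|=\|x^{\ast}\|$ (valid for every $p$-norm) gives $\|b\|\leq(\|A\|+\|B\|)\|x^{\ast}\|$, which is precisely (\ref{eq:36}). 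Dividing the absolute estimate by $\|x^{\ast}\|$, inserting the factor $\|b\|/\|b\|$, and substituting (\ref{eq:36}) then assembles the claimed inequality.

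I expect the only delicate point to be the linearization via Lemma 2.1 together with the accompanying need to know that $A-B\tilde{D}$ is invertible, since without the uniqueness assumption the relation (\ref{eq:33}) would be meaningless. The remaining steps are routine norm manipulations; the single place where the choice of norm genuinely matters is the identity $\||x^{\ast}|\|=\|x^{\ast}\|$, so I would be careful to flag that the whole argument is tied to the $p$-norm convention fixed in the introduction.
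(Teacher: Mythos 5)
Your proposal is correct and follows essentially the same route as the paper: subtract (\ref{p1}) from (\ref{p5}), linearize $|x^{\ast}|-|y^{\ast}|$ via Lemma 2.1 to get $x^{\ast}-y^{\ast}=-(A-B\tilde{D})^{-1}\Delta b$, bound $\|(A-B\tilde{D})^{-1}\|$ by $\overline{\alpha}$, and combine with $\|b\|/\|x^{\ast}\|\leq\|A\|+\|B\|$. The only difference is that you spell out details the paper leaves implicit (the invertibility of $A-B\tilde{D}$ from the uniqueness assumption, and the $p$-norm identity $\||x^{\ast}|\|=\|x^{\ast}\|$ behind (\ref{eq:36})), which is a faithful reconstruction rather than a new argument.
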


Similarly, we assume that  $y^{\ast}$ is the unique solution of AVEs
(\ref{p4}). The following theorem is the framework of AVEs
(\ref{p1}) perturbation.

\begin{theorem}
Let $x^{\ast}$, $y^{\ast}$ be the unique solutions of AVEs
(\ref{p1}) and (\ref{p4}), respectively. Then
\begin{equation}\label{eq:37}
\frac{\|x^{\ast}-y^{\ast}\|}{\|x^{\ast}\|}
\leq\mu_{1}\bigg(\frac{\|\Delta b\|}{\|b\|}(\|A\|+\|B\|)+\|\Delta
A\|+\|\Delta B\|\bigg).
\end{equation}
where
\begin{equation*}
\mu_{1}=\max \{\|(A-BD+(\Delta A-\Delta BD ))^{-1}\|:D=diag(d_{i})\
\mbox{with}\ d_{i}\in [-1,1]\}.
\end{equation*}
\end{theorem}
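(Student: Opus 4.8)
The plan is to mimic the argument that produced Theorem 3.1, but now carrying the perturbations $\Delta A$ and $\Delta B$ through the subtraction. First I would subtract the defining equation $Ax^{\ast}-B|x^{\ast}|=b$ from the perturbed equation $(A+\Delta A)y^{\ast}-(B+\Delta B)|y^{\ast}|=b+\Delta b$. To expose the difference $y^{\ast}-x^{\ast}$, I would rewrite $A=(A+\Delta A)-\Delta A$ and $B=(B+\Delta B)-\Delta B$ inside the term $Ax^{\ast}-B|x^{\ast}|$, which regroups the subtraction into
\[
(A+\Delta A)(y^{\ast}-x^{\ast})-(B+\Delta B)(|y^{\ast}|-|x^{\ast}|)=\Delta b-\Delta A\,x^{\ast}+\Delta B\,|x^{\ast}|.
\]
Applying Lemma 2.1 to the pair $y^{\ast},x^{\ast}$ then replaces $|y^{\ast}|-|x^{\ast}|$ by $\tilde D(y^{\ast}-x^{\ast})$ for a diagonal $\tilde D=\mathrm{diag}(\tilde d_i)$ with $\tilde d_i\in[-1,1]$.

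Next I would factor the left-hand side. Since $(A+\Delta A)-(B+\Delta B)\tilde D=(A-B\tilde D)+(\Delta A-\Delta B\tilde D)$, the coefficient matrix is exactly the one appearing in the definition of $\mu_1$, so that
\[
y^{\ast}-x^{\ast}=\big[(A-B\tilde D)+(\Delta A-\Delta B\tilde D)\big]^{-1}\big(\Delta b-\Delta A\,x^{\ast}+\Delta B\,|x^{\ast}|\big).
\]
Taking norms, using $\|\,|x^{\ast}|\,\|=\|x^{\ast}\|$ (valid for any $p$-norm), and bounding $\|[(A-B\tilde D)+(\Delta A-\Delta B\tilde D)]^{-1}\|\le\mu_1$ (legitimate because $\tilde D$ is one admissible choice of $D$ in the maximum defining $\mu_1$) gives $\|y^{\ast}-x^{\ast}\|\le\mu_1\big(\|\Delta b\|+(\|\Delta A\|+\|\Delta B\|)\|x^{\ast}\|\big)$.

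Finally I would divide by $\|x^{\ast}\|$ and convert the term $\|\Delta b\|/\|x^{\ast}\|$ by means of (\ref{eq:36}): writing $\|\Delta b\|/\|x^{\ast}\|=(\|\Delta b\|/\|b\|)(\|b\|/\|x^{\ast}\|)\le(\|\Delta b\|/\|b\|)(\|A\|+\|B\|)$ yields precisely (\ref{eq:37}). I expect the only genuine subtlety to be the well-definedness and finiteness of $\mu_1$: the inverse $[(A-B\tilde D)+(\Delta A-\Delta B\tilde D)]^{-1}$ must exist for the specific $\tilde D$ produced by Lemma 2.1, and in fact for every admissible $D$. This is guaranteed by the nonsingularity criterion recalled at the end of the Introduction, which is exactly the condition under which the perturbed AVE (\ref{p4}) possesses a unique solution; under the theorem's standing assumption that $y^{\ast}$ is the unique solution, the maximum defining $\mu_1$ is taken over a family of invertible matrices and is finite. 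The remaining manipulations are the routine norm estimates already employed for Theorem 3.1.
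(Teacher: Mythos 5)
Your proposal is correct and follows essentially the same route as the paper's proof: your rewriting $A=(A+\Delta A)-\Delta A$, $B=(B+\Delta B)-\Delta B$ is algebraically identical to the paper's step of recasting (\ref{p1}) as $(A+\Delta A)x^{\ast}-(B+\Delta B)|x^{\ast}|=b+\Delta Ax^{\ast}-\Delta B|x^{\ast}|$, after which both arguments apply Lemma 2.1, invert the matrix $(A-B\tilde D)+(\Delta A-\Delta B\tilde D)$, bound its inverse by $\mu_1$, and finish with (\ref{eq:36}). Your closing remark on the well-definedness of $\mu_1$ (nonsingularity for every admissible $D$, via the unique-solvability criterion) is a point the paper leaves implicit, so it is a welcome addition rather than a deviation.
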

\begin{proof} Based on the assumptions, the AVEs (\ref{p1}) is
equal to
\begin{equation}\label{eq:38}
(A+\Delta A)x^{\ast}-(B+\Delta B)|x^{\ast}|=b+\Delta
Ax^{\ast}-\Delta B|x^{\ast}|.
\end{equation}
Subtracting (\ref{eq:38}) from (\ref{p4}) with $y^{\ast}$, we have
\begin{equation}\label{eq:n39}
(A-B\tilde{D}+(\Delta A-\Delta B\tilde{D}
))(x^{\ast}-y^{\ast})=-\Delta b+\Delta Ax^{\ast}-\Delta B|x^{\ast}|,
\end{equation}
where $\tilde{D}=diag(\tilde{d}_{i})$ with $\tilde{d}_{i}\in
[-1,1]$.  Using the norm for (\ref{eq:n39}), similar to
(\ref{eq:35}), results in
\begin{equation}\label{eq:310}
\|x^{\ast}-y^{\ast}\|\leq\mu_{1}(\|\Delta b\|+(\|\Delta A\|+\|\Delta
B\|)\|x^{\ast}\|).
\end{equation}
By making use of
(\ref{eq:310}) and (\ref{eq:36}), the desired bound (\ref{eq:37})
can be obtained. \end{proof}

\begin{remark}
Here, a special case is considered, i.e, when $B=0$ in (\ref{p1}),
the AVEs (\ref{p1}) reduces to the linear systems
\[
Ax=b.
\]
It is not difficult to find that from Theorem 3.2 we easily obtain a
classical perturbation bound for the linear systems $Ax=b$, i.e.,
\[\frac{\|x^{\ast}-y^{\ast}\|}{\|x^{\ast}\|}
\leq \frac{\kappa(A)}{1-\frac{\kappa(A)\|\Delta
A\|}{\|A\|}}\Bigg(\frac{\|\Delta b\|}{\|b\|}+\frac{\|\Delta
A\|}{\|A\|}\Bigg)
\]
for $\|A^{-1}\|\|\Delta A\| < 1$, where $\kappa(A)=\|A^{-1}\|\|A\|$,
see Theorem 1.3 in numerical linear algebra textbooks \cite{Sun01}
and Theorem 2.1 \cite{Skeel79}. In fact, in such case, the right
side in (\ref{eq:37}) reduces to
\[
\|(A+\Delta A)^{-1}\|\bigg(\frac{\|\Delta b\|}{\|b\|}(\|A\|+\|\Delta
A\|\bigg).
\]
Noting that $\|A^{-1}\|\|\Delta A\| <1$, by the Banach perturbation
lemma in \cite{Ortega}, we have
\begin{align*}
\|(A+\Delta A)^{-1}\|\bigg(\frac{\|\Delta b\|}{\|b\|}(\|A\|+\|\Delta
A\|\bigg)&=\|(A+\Delta A)^{-1}\|\|A\|\bigg(\frac{\|\Delta
b\|}{\|b\|}+\frac{\|\Delta A\|}{\|A\|}\bigg)\\
&\leq\frac{\|A^{-1}\|}{1-\|A^{-1}\|\|\Delta
A\|}\|A\|\bigg(\frac{\|\Delta b\|}{\|b\|}+\frac{\|\Delta
A\|}{\|A\|}\bigg)\\
&=\frac{\kappa(A)}{1-\frac{\kappa(A)\|\Delta
A\|}{\|A\|}}\Bigg(\frac{\|\Delta b\|}{\|b\|}+\frac{\|\Delta
A\|}{\|A\|}\Bigg).
\end{align*}

\end{remark}

\begin{remark} Similar to the AVEs (\ref{p1}), some perturbation
bounds for the AVEs (\ref{eq:2}) and (\ref{p3}) can be obtained.
Here is omitted.
\end{remark}

\begin{remark}
By directly making use of the results in  the Section 2.2,
naturally, we can present some computable upper bounds for $\mu_{1}$
in Theorem 3.2.
\end{remark}

\section{An application}

As an application, in this section, a new approach for the existing
perturbation bounds of LCP in \cite{Chen07}. As is known, for the
LCP, without making use of change of variable, it is also
transformed into a certain absolute value equations by the fact that
\[
w=Mz+q, z\geq 0,w \geq 0, z^{T}w = 0 \Leftrightarrow \min\{z,
Mz+q\}=0,
\]
see \cite{Chen06, Zhang09}. Noting that
\[\min\{a,b\}=\frac{1}{2}(a+b-|a-b|)\ \forall a,b \in
\mathbb{R}^{n},
\]
obviously, the LCP $(M,q)$ is equal to the following absolute value
equations
\begin{equation}\label{eq:422}
\tfrac{1}{2}((M+I)z+q)=\tfrac{1}{2}|(M-I)z+q|
\end{equation}
by setting $a=Mz+q$ and $b=z$.

In fact, by making use of the AVEs (\ref{eq:422}), we also obtain
Theorem 2.8 and Theorem 3.1 in \cite{Chen07}. To achieve this goal,
without loss of generality, we consider the relationship between the
solution of the LCP$(A,b)$ and the solution of LCP$(B,c)$, i.e.,
\[
\min\{x, Ax+b\}=0,\ \mbox{and}\  \min\{x, Bx+c\}=0.
\]
Assume that $x^{\ast}$ and $y^{\ast}$ are the unique solutions of
the LCP$(A,b)$ and the LCP$(B,c)$, respectively. Similar to
(\ref{eq:422}), we have
\begin{equation}\label{eq:423}
\tfrac{1}{2}((A+I)x^{\ast}+b)=\tfrac{1}{2}|(A-I)x^{\ast}+b|
\end{equation}
and
\begin{equation}\label{eq:424}
\tfrac{1}{2}((B+I)y^{\ast}+c)=\tfrac{1}{2}|(B-I)y^{\ast}+c|.
\end{equation}
Subtracting  (\ref{eq:423}) from (\ref{eq:424}), together with Lemma
2.1, we have
\[
\frac{I+\tilde{D}}{2}(x^{\ast}-y^{\ast})+\frac{I-\tilde{D}}{2}Ax^{\ast}=\frac{(I-\tilde{D})B}{2}y^{\ast}-\frac{(I-\tilde{D})(b-c)}{2},
\]
which is equal to
\[
\frac{I+\tilde{D}}{2}(x^{\ast}-y^{\ast})+\frac{(I-\tilde{D})Ax^{\ast}}{2}-\frac{(I-\tilde{D})Ay^{\ast}}{2}
=-\frac{(I-\tilde{D})Ay^{\ast}}{2}+\frac{(I-\tilde{D})By^{\ast}}{2}-\frac{(I-\tilde{D})(b-c)}{2},
\]
or,
\begin{equation}\label{eq:44}
\frac{I+\tilde{D}+(I-\tilde{D})A}{2}(x^{\ast}-y^{\ast})=-\frac{(I-\tilde{D})(A-B)}{2}y^{\ast}-\frac{(I-\tilde{D})(b-c)}{2},
\end{equation}
where $\tilde{D}=diag(\tilde{d}_{i})$ with $\tilde{d}_{i}\in
[-1,1]$. We take
$\tilde{\Lambda}=\frac{I-\tilde{D}}{2}=diag(\tilde{\lambda}_{i})$
with $\tilde{\lambda}_{i}\in [0,1]$, and from (\ref{eq:44}) get
\begin{equation}\label{eq:425}
(I-\tilde{\Lambda}+\tilde{\Lambda}
A)(x^{\ast}-y^{\ast})=-\tilde{\Lambda}(A-B)y^{\ast}-\tilde{\Lambda}(b-c).
\end{equation}
From (\ref{eq:425}), we have
\begin{align} \label{eq:426}
\|x^{\ast}-y^{\ast}\|=&\|(I-\tilde{\Lambda}+\tilde{\Lambda}
A)^{-1}\tilde{\Lambda}((A-B)y^{\ast}+b-c)\|\nonumber\\
\leq&\|(I-\tilde{\Lambda}+\tilde{\Lambda}
A)^{-1}\tilde{\Lambda}\|\|(A-B)y^{\ast}+b-c\|\nonumber\\
\leq&\|(I-\tilde{\Lambda}+\tilde{\Lambda}
A)^{-1}\tilde{\Lambda}\|(\|A-B\|\|y^{\ast}\|+\|b-c\|)\nonumber\\
\leq&\beta(A)(\|A-B\|\|y^{\ast}\|+\|b-c\|),
\end{align}
where $\beta(A)=\max\{\|(I-\Lambda+\Lambda
A)^{-1}\Lambda\|:\Lambda=diag(\lambda_{i})~ \mbox{with}~ \lambda_{i}
\in[0,1]\}$, or
\begin{align} \label{eq:427}
\|x^{\ast}-y^{\ast}\| \leq&\beta(B)(\|A-B\|\|x^{\ast}\|+\|b-c\|),
\end{align}
where $\beta(B)=\max\{\|(I-\Lambda+\Lambda
B)^{-1}\Lambda\|:\Lambda=diag(\lambda_{i})~ \mbox{with}~ \lambda_{i}
\in[0,1]\}$.

Noting that 0 is the solution of LCP$(B,c_{+})$, repeating the above
process, we obtain
\begin{equation}\label{eq:428}
\|y^{\ast}\|\leq \beta(B)\|(-c)_{+}\|.
\end{equation}
Furthermore, from $Ax^{\ast}+b\geq0$, we obtain
$(-b)_{+}\leq(Ax^{\ast})_{+}\leq|Ax^{\ast}|$, which implies that
\begin{equation}\label{eq:429}
\frac{1}{\|x^{\ast}\|}\leq\frac{\|A\|}{\|(-b)_{+}\|}.
\end{equation}
Hence,  combining  (\ref{eq:426}), (\ref{eq:427}), (\ref{eq:428})
with (\ref{eq:429}), we have

\begin{lemma}
Assume that $x^{\ast}$ and $y^{\ast}$ are the unique solutions of
the LCP$(A,b)$ and the LCP$(B,c)$, respectively. Let
\begin{equation*}\beta(A)=\max\{\|(I-\Lambda+\Lambda A)^{-1}\Lambda\|:\Lambda=diag(\lambda_{i})~ \mbox{with}~ \lambda_{i} \in[0,1]\}
\end{equation*} and
\begin{equation*}\beta(B)=\max\{\|(I-\Lambda+\Lambda B)^{-1}\Lambda\|:\Lambda=diag(\lambda_{i})~ \mbox{with}~ \lambda_{i}
\in[0,1]\}.
\end{equation*} 
Then
\begin{equation*}\label{eq:430}
\|x^{\ast}-y^{\ast}\| \leq
\beta(A)(\beta(B)\|A-B\|\|(-c)_{+}\|+\|b-c\|)
\end{equation*}
and
\begin{equation*}\label{eq:431}
\frac{\|x^{\ast}-y^{\ast}\| }{\|x^{\ast}\|}\leq
\beta(B)\Big(\|A-B\|+\frac{\|b-c\|\|A\|}{\|(-b)_{+}\|}\Big).
\end{equation*}
\end{lemma}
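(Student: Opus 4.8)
The plan is to obtain both asserted inequalities purely by assembling the four auxiliary estimates (\ref{eq:426})--(\ref{eq:429}) that have already been derived in the passage preceding the statement; no further machinery is required. The only decision to make is which of the two ``directions'' of the basic perturbation inequality ((\ref{eq:426}) versus (\ref{eq:427})) to feed into which of the two norm estimates ((\ref{eq:428}) versus (\ref{eq:429})), so that the right-hand side ends up expressed entirely in terms of the data $A,B,b,c$ and the constants $\beta(A),\beta(B)$.

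For the absolute (first) bound I would start from (\ref{eq:426}), namely $\|x^{\ast}-y^{\ast}\|\le\beta(A)(\|A-B\|\|y^{\ast}\|+\|b-c\|)$, and eliminate the unknown quantity $\|y^{\ast}\|$ by inserting (\ref{eq:428}), i.e. $\|y^{\ast}\|\le\beta(B)\|(-c)_{+}\|$. Substituting and regrouping gives $\|x^{\ast}-y^{\ast}\|\le\beta(A)\big(\beta(B)\|A-B\|\|(-c)_{+}\|+\|b-c\|\big)$, which is exactly the first claim.

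For the relative (second) bound I would instead use the companion estimate (\ref{eq:427}), $\|x^{\ast}-y^{\ast}\|\le\beta(B)(\|A-B\|\|x^{\ast}\|+\|b-c\|)$, divide both sides by $\|x^{\ast}\|$ to obtain $\frac{\|x^{\ast}-y^{\ast}\|}{\|x^{\ast}\|}\le\beta(B)\big(\|A-B\|+\frac{\|b-c\|}{\|x^{\ast}\|}\big)$, and then replace the factor $1/\|x^{\ast}\|$ by its upper bound from (\ref{eq:429}), $\frac{1}{\|x^{\ast}\|}\le\frac{\|A\|}{\|(-b)_{+}\|}$. This yields $\frac{\|x^{\ast}-y^{\ast}\|}{\|x^{\ast}\|}\le\beta(B)\big(\|A-B\|+\frac{\|b-c\|\|A\|}{\|(-b)_{+}\|}\big)$, the second claim.

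The genuinely substantive work lies in the four auxiliary estimates rather than in this closing assembly, so the ``main obstacle'' is essentially already cleared by the time the lemma is stated. Conceptually, the delicate inputs are (\ref{eq:428}), which exploits that $0$ solves $\mathrm{LCP}(B,c_{+})$ so that the very same subtraction-and-reformulation argument that produced (\ref{eq:426})--(\ref{eq:427}) can be applied to the pair $(y^{\ast},0)$, and (\ref{eq:429}), which uses the complementarity feasibility $Ax^{\ast}+b\ge 0$ together with $(-b)_{+}\le(Ax^{\ast})_{+}\le|Ax^{\ast}|$ to control $1/\|x^{\ast}\|$ from above. Granting these, the remaining combination is routine; the one caveat I would flag is the implicit nondegeneracy requirement $\|(-b)_{+}\|\neq 0$ (equivalently $x^{\ast}\neq 0$), without which the relative bound is vacuous.
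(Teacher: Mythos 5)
Your proposal is correct and is exactly the paper's own argument: the paper proves the lemma by the very same assembly, plugging (\ref{eq:428}) into (\ref{eq:426}) for the absolute bound and combining (\ref{eq:427}) with (\ref{eq:429}) for the relative bound. Your caveat that the relative bound requires $\|(-b)_{+}\|\neq 0$ is a reasonable observation the paper leaves implicit.
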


Let
\begin{equation}\label{eq:432}
\mathcal{ M}:=\{A|\ \beta(M)\|M-A\|\leq\eta<1\},
\end{equation}
where
\[
\beta(M)=\max\|(I-\Lambda+\Lambda M)^{-1}\Lambda\|.
\]
Then for any $A\in \mathcal{ M}$, we have
\begin{equation}\label{eq:433}
\beta(A)\leq \alpha(M)=\frac{1}{1-\eta}\beta(M),
\end{equation}
see \cite{Chen07}. Together with Lemma 4.1, Theorem 4.1 can be
obtained, i.e., Theorems 2.8 and 3.1 in \cite{Chen07}.

\begin{theorem} (Theorems 2.8 and 3.1 in \cite{Chen07})
Assume that $x^{\ast}$ and $y^{\ast}$ are the unique solutions of
the LCP$(A,b)$ and the LCP$(B,c)$, respectively. Then for any
$A,B\in \mathcal{ M}$, where $\mathcal{ M}$ is defined as in
(\ref{eq:432}),
\begin{equation*}\label{eq:434}
\|x^{\ast}-y^{\ast}\| \leq
\alpha(M)^{2}\|A-B\|\|(-c)_{+}\|+\alpha(M)\|b-c\|,
\end{equation*}
where $\alpha(M)$ is defined as in (\ref{eq:433}). Further, let
$A=M$, $B=M+\Delta M$, $b=q$, $c=q+\Delta q$ with $\|\Delta M\|\leq
\epsilon\| M\|$ and $ \|\Delta q\|\leq \epsilon\|(-q)_{+}\|$,
$\epsilon \beta(M)\|M\|=\delta<1$. Then
\begin{equation*}\label{eq:435}
\frac{\|x^{\ast}-y^{\ast}\|
}{\|x^{\ast}\|}\leq\frac{2\delta}{1-\delta}.
\end{equation*}
\end{theorem}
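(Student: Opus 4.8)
The goal is to prove Theorem 4.3, which consists of two parts: an absolute perturbation bound $\|x^{\ast}-y^{\ast}\| \leq \alpha(M)^{2}\|A-B\|\|(-c)_{+}\|+\alpha(M)\|b-c\|$ for $A,B\in\mathcal{M}$, and the relative bound $\frac{\|x^{\ast}-y^{\ast}\|}{\|x^{\ast}\|}\leq\frac{2\delta}{1-\delta}$ under the specialization $A=M$, $B=M+\Delta M$, $b=q$, $c=q+\Delta q$. The entire machinery has already been assembled: Lemma 4.2 supplies the two raw inequalities, and the displayed relation (\ref{eq:433}) tells us that $\beta(A)\leq\alpha(M)$ whenever $A\in\mathcal{M}$. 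So the proof is almost purely a substitution-and-bookkeeping argument, and I would present it as such.

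\textbf{Part one (absolute bound).} I would start from the first inequality in Lemma 4.2, namely $\|x^{\ast}-y^{\ast}\| \leq \beta(A)\bigl(\beta(B)\|A-B\|\|(-c)_{+}\|+\|b-c\|\bigr)$. Since $A,B\in\mathcal{M}$, I invoke (\ref{eq:433}) twice to replace $\beta(A)$ and $\beta(B)$ each by $\alpha(M)$; this is legitimate precisely because $\alpha(M)$ is an upper bound for $\beta$ on all of $\mathcal{M}$. Distributing $\alpha(M)$ across the bracket then yields
\[
\|x^{\ast}-y^{\ast}\| \leq \alpha(M)^{2}\|A-B\|\|(-c)_{+}\|+\alpha(M)\|b-c\|,
\]
which is the claimed inequality. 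The only thing worth checking here is that membership $M\in\mathcal{M}$ holds so that $\alpha(M)$ is well-defined (take $A=M$ in (\ref{eq:432}), giving $\beta(M)\cdot 0=0\leq\eta$), and that both $A$ and $B$ are genuinely in $\mathcal{M}$ as hypothesized.

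\textbf{Part two (relative bound).} I would substitute $A=M$, $B=M+\Delta M$, $b=q$, $c=q+\Delta q$ into the second inequality of Lemma 4.2, $\frac{\|x^{\ast}-y^{\ast}\|}{\|x^{\ast}\|}\leq \beta(B)\bigl(\|A-B\|+\frac{\|b-c\|\,\|A\|}{\|(-b)_{+}\|}\bigr)$, so that $\|A-B\|=\|\Delta M\|$, $\|b-c\|=\|\Delta q\|$, $\|A\|=\|M\|$, and $\|(-b)_{+}\|=\|(-q)_{+}\|$. Using $\|\Delta M\|\leq\epsilon\|M\|$, $\|\Delta q\|\leq\epsilon\|(-q)_{+}\|$, and $\beta(B)\leq\alpha(M)=\frac{1}{1-\eta}\beta(M)$ (here I must confirm $B=M+\Delta M\in\mathcal{M}$, which follows from $\beta(M)\|\Delta M\|\leq\epsilon\beta(M)\|M\|=\delta$, so one should take $\eta=\delta$), each of the two terms in the bracket is bounded by $\epsilon\|M\|$, giving a factor $2\epsilon\|M\|$. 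Multiplying by $\alpha(M)=\frac{\beta(M)}{1-\delta}$ and recognizing $\epsilon\beta(M)\|M\|=\delta$ collapses the estimate to $\frac{2\delta}{1-\delta}$.

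\textbf{The main obstacle.} There is no deep analytic difficulty; the work is entirely in verifying the set-membership conditions that license the uniform replacement of $\beta(\cdot)$ by $\alpha(M)$. The subtle point is the consistency of the constant $\eta$ in (\ref{eq:432})–(\ref{eq:433}) with the $\delta$ appearing in Part two: one must ensure $\delta<1$ indeed forces $B\in\mathcal{M}$ with the same $\eta$, so that $\alpha(M)$ is the correct uniform bound, and that the denominators $\|(-q)_{+}\|$ and $\|x^{\ast}\|$ are nonzero (guaranteed by the nontriviality of the solution, i.e. $b\notin$ the region giving $x^{\ast}=0$). Once these bookkeeping conditions are in place, both displayed bounds drop out by direct substitution, so I would keep the write-up short and route the reader to Lemma 4.2 and (\ref{eq:433}) for the substantive content.
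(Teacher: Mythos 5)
Your proposal is correct and follows exactly the route the paper takes: the paper derives Theorem 4.3 by combining the two inequalities of Lemma 4.2 with the bound $\beta(\cdot)\leq\alpha(M)$ from (\ref{eq:433}), which is precisely your substitution argument. Your added bookkeeping (verifying $B=M+\Delta M\in\mathcal{M}$ with $\eta=\delta$, and collapsing $2\epsilon\beta(M)\|M\|/(1-\delta)$ to $2\delta/(1-\delta)$) simply makes explicit what the paper leaves implicit.
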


\section{Numerical examples}
In this section, we focus on investigating the feasibility of the
relative perturbation bounds of Theorem 3.2 in the Section 3 by
making use of some numerical examples. For the sake of convenience,
we consider the AVEs (\ref{p1})
\[
Ax-B|x|=b,\ \mbox{with} \ A=I+M, B=I-M  \ \mbox{and}\ b=-q,
\]
which is from the LCP (\ref{eq:14}) by making use of change of
variable $z=|x|+x$ and $w=|x|-x$. For the convenient comparison,
here we introduce some  notations, i.e.,
\[
r=\frac{\|x^{\ast}-y^{\ast}\|_{2}}{\|x^{\ast}\|_{2}},
\tau=\|(I-|\mathcal{A}^{-1}\mathcal{B}|)^{-1}\|_{2}\|\mathcal{A}^{-1}\|_{2}w,
\]
\[
\upsilon=\frac{w}{\sigma_{\min}(\mathcal{A})-\sigma_{\max}(\mathcal{B})},
\nu=\frac{\|\mathcal{A}^{-1}\mathcal{B}\|_{2}\|\mathcal{B}^{-1}\|_{2}w}{1-\|\mathcal{A}^{-1}\mathcal{B}\|_{2}},
\]
where
\[
\mathcal{A}=A+\Delta A,\mathcal{B}=B+\Delta B, w=\frac{\|\Delta
b\|_{2}}{\|b\|_{2}}(\|A\|_{2}+\|B\|_{2})+\|\Delta A\|_{2}+\|\Delta
B\|_{2},
\]
$\tau$, $\upsilon$ and $\nu$ in order denote the upper bounds of the
relative  perturbation bounds of Theorem 3.2 under Theorems 2.5, 2.7
and 2.9, $x^{\ast}$ and $y^{\ast}$ in order denote the solutions of
the corresponding AVEs and the perturbed AVEs and can be obtained by
using the iteration method on page 364 in \cite{Mangasarian06} with
the initial vector being zero and the corresponding absolute error
less than $10^{-6}$. All the computations are done in Matlab R2021b
on an HP PC (Intel@ Celeron@ G4900, 3.10GHz, 8.00 GB of RAM).

\textbf{Example 5.1} \cite{Ahn83}  Let
\[
M=\mbox{tridiag}(1,4,-2)\in \mathbb{R}^{n\times n}, q=-4e\in
\mathbb{R}^{n}.
\]
Obviously,  $M$ is an $H_{+}$-matrix. This implies that the
corresponding LCP has a unique solution, i.e., the equal AVEs has a
unique solution.

One is interested in the perturbation error for the solution of the
corresponding AVEs (\ref{p1}) caused by small changes in $A$, $B$
and $b$. The perturbation way we do it is
\[
\Delta A=\epsilon\mbox{tridiag}(1,2,-1), \Delta
B=\epsilon\mbox{tridiag}(1,1,1), \Delta b=\epsilon e.
\]

%
%
%

\begin{table}[!htb] \centering
\begin{tabular}
{p{10pt}p{55pt}p{55pt}p{55pt}p{55pt}p{55pt}} \hline
$\epsilon$&0.01&0.015&0.02 &0.025&0.03
\\\hline
$r$&  0.0040 &   0.0061  &  0.0081  &  0.0101  &  0.0122 \\
$\tau$&  0.2845 &   0.4124 &   0.5321  &  0.6442  &  0.7495\\
$\upsilon$& 0.0465  &  0.0692 &   0.0916 &   0.1138 &   0.1356 \\
$\nu$& 0.1104 &   0.1650  & 0.2194   & 0.2736 &    0.3275\\
\hline
\end{tabular}
\\ \caption{Relative perturbation bounds of Example 5.1 with $n=30$.}
\end{table}

\begin{table}[!htb] \centering
\begin{tabular}
{p{10pt}p{55pt}p{55pt}p{55pt}p{55pt}p{55pt}} \hline
$\epsilon$&0.01&0.015&0.02 &0.025&0.03
\\\hline
$r$&0.0041 &   0.0061 &   0.0082 &   0.0102 &   0.0123 \\
$\tau$&   0.2956&    0.4282&    0.5519  &  0.6677  &  0.7762\\
$\upsilon$&  0.0473 &   0.0704 &   0.0932  &  0.1157 &   0.1378\\
$\nu$&   0.1112  &  0.1663&   0.2211&    0.2757 &   0.3301 \\
\hline
\end{tabular}
\\ \caption{Relative perturbation bounds of Example 5.1 with $n=40$.}
\end{table}

In Tables 1 and 2, we list the numerical results for three relative
perturbation bounds with the different size and $\epsilon$, from
which we find that among three relative bounds, $\upsilon<\nu$ and
$\upsilon<\tau$, i.e., $\upsilon$ is closet to the true relative
error. In addition, these numerical results also illustrate that the
proposed bounds are very close to the real relative value when the
perturbation is very small. From Tables 1 and 2, it is easy to find
that the relative perturbation bound given by Theorem 3.2 is
feasible and effective under some suitable conditions.

Next, we investigate another example, which is from \cite{Bai10}.

\textbf{Example 5.2} \cite{Bai10} Let $M=\hat{M}+\mu I$,
$q=-Mz^{\ast}$, where $\mu =4$,
\[\hat{M}=\mbox{blktridiag}(-I,S,-I)\in \mathbb{R}^{n\times n},
S=\mbox{tridiag}(-1,4,-1)\in \mathbb{R}^{m\times m}, n=m^{2}, \] and
$z^{\ast}=(1,2,1,2,\ldots,1,2,\ldots)^{T}\in \mathbb{R}^{n}$ is the
unique solution of the corresponding LCP. Of course, the equal AVEs
has a unique solution as well.

For Example 5.2, the perturbation way we do it is
\[
\Delta A=\epsilon\mbox{tridiag}(-1,2,-1), \Delta
B=\epsilon\mbox{tridiag}(1,-1,1), \Delta b=\epsilon e.
\]

\begin{table}[!htb] \centering
\begin{tabular}
{p{10pt}p{55pt}p{55pt}p{55pt}p{55pt}p{55pt}} \hline
$\epsilon$&0.01&0.015&0.02 &0.025&0.03
\\\hline
$r$& 0.0028&    0.0042&    0.0056&    0.0070&    0.0084  \\
$\tau$&  0.2571 &   0.3870&    0.5177&    0.6493 &   0.7817   \\
$\upsilon$& 0.0422&    0.0631&    0.0839 &   0.1046&    0.1252  \\
$\nu$& 0.1731&    0.2598&    0.3465&    0.4334&    0.5203 \\
\hline
\end{tabular}
\\ \caption{Relative perturbation bounds of Example 5.2 with $n= 225$.}
\end{table}

\begin{table}[!htb] \centering
\begin{tabular}
{p{10pt}p{55pt}p{55pt}p{55pt}p{55pt}p{55pt}} \hline
$\epsilon$&0.01&0.015&0.02 &0.025&0.03
\\\hline
$r$&  0.0030  &  0.0045  &  0.0060   & 0.0075  &  0.0090    \\
$\tau$&    0.2798&    0.4212&    0.5637&    0.7071  &  0.8516 \\
$\upsilon$&   0.0466&    0.0697&    0.0927&    0.1155 &   0.1382  \\
$\nu$&    0.1835&     0.2754&     0.3674&     0.4595&     0.5517   \\
\hline
\end{tabular}
\\ \caption{Relative perturbation bounds of Example 5.2 with $n=400$.}
\end{table}

Similarly, with the different size and $\epsilon$, Tables 3 and 4
list these three relative perturbation bounds for Example 5.2. From
Tables 3 and 4, we can draw the same conclusion. That is to say,
among these three bounds, $\upsilon$ is optimal, compared with other
two bounds.  These numerical results in Tables 3 and 4 further
illustrate that when the perturbation term is very small, the
proposed bounds are very close to the real relative value.
Meanwhile, this further confirms that under some suitable
conditions, Theorem 3.2 indeed provides some valid relative
perturbation bounds.

\section{Conclusion}
In this paper, by introducing a class of absolute value functions,
the frameworks of error and perturbation bounds for two types of
absolute value equations (AVEs) have been established. Besides,
without limiting the matrix type, some computable estimates for the
above upper bounds are given. As an application, a new approach for
some existing perturbation bounds of the LCP in \cite{Chen07}  is
provided. In addition, some numerical examples for absolute value
equations from the LCP are given to show the feasibility of the
proposed perturbation bounds.

%
%
%

%

{\footnotesize
}

\end{document}